\let\pa\partial  
\let\na\nabla  
\let\eps\varepsilon  
\newcommand{\N}{{\mathbb N}}  
\newcommand{\R}{{\mathbb R}}
\newcommand{\Z}{{\mathbb Z}} 
\newcommand{\T}{{\mathbb T}}
\newcommand{\C}{{\mathbb C}}
\renewcommand{\S}{{\mathbb S}}
\renewcommand{\H}{{\mathcal H}}
\renewcommand{\ker}{{\mathcal N}}
\newcommand{\ran}{{\mathcal R}}
\newcommand{\D}{{\mathcal D}}
\newcommand{\e}{{\mathrm e}}
\newcommand{\I}{{\mathrm i}}
\newcommand{\blue}{\textcolor{black}}
\newtheorem{theorem}{Theorem}   
\newtheorem{lemma}[theorem]{Lemma}   
\newtheorem{remark}[theorem]{Remark}
\begin{document}  

\title[Hypocoercivity for a Boltzmann system]{Hypocoercivity for a linearized
multi-species Boltzmann system}

\author[E. S. Daus]{Esther S. Daus}
\address{Institute for Analysis and Scientific Computing, Vienna University of  
Technology, Wiedner Hauptstra\ss e 8--10, 1040 Wien, Austria}
\email{esther.daus@tuwien.ac.at}

\author[A. J\"{u}ngel]{Ansgar J\"{u}ngel}
\address{Institute for Analysis and Scientific Computing, Vienna University of  
Technology, Wiedner Hauptstra\ss e 8--10, 1040 Wien, Austria}
\email{juengel@tuwien.ac.at}

\author[C. Mouhot]{Cl\'{e}ment Mouhot}
\address{DPMMS, University of Cambridge\\Wilberforce Road,
Cambridge CB3 0WA, United Kingdom}
\email{c.mouhot@dpmms.cam.ac.uk}

\author[N. Zamponi]{Nicola Zamponi}
\address{Institute for Analysis and Scientific Computing, Vienna University of  
Technology, Wiedner Hauptstra\ss e 8--10, 1040 Wien, Austria}
\email{nicola.zamponi@tuwien.ac.at}

\date{\today}

\thanks{The authors thank Christian Schmeiser for very fruitful discussions and 
valuable help and the referees for their careful reading and helpful corrections. 
Furthermore, they acknowledge partial support from   
the Austrian Science Fund (FWF), grants P24304, P27352, and W1245, and    
the Austrian-French Program of the Austrian Exchange Service (\"OAD)}  

\begin{abstract}
  A new coercivity estimate on the spectral gap of the linearized
  Boltzmann collision operator for multiple species is proved. The
  assumptions on the collision kernels include hard and Maxwellian
  potentials under Grad's angular cut-off condition. Two proofs are
  given: a non-constructive one, based on the decomposition of the
  collision operator into a compact and a coercive part, and a
  constructive one, which exploits the ``cross-effects'' coming from
  collisions between different species and which yields explicit
  constants. Furthermore, the essential spectra of the linearized
  collision operator and the linearized Boltzmann operator are
  calculated. Based on the spectral-gap estimate, the exponential
  convergence towards global equilibrium with explicit rate is shown
  for solutions to the linearized multi-species Boltzmann system on
  the torus. The convergence is achieved by the interplay between the
  dissipative collision operator and the conservative transport
  operator and is proved by using the hypocoercivity method of Mouhot
  and Neumann.
\end{abstract}

\keywords{Boltzmann equation; multi-species mixture; hypocoercivity;
energy method; rate of convergence to equilibrium; spectral gap.} 

\subjclass[2010]{35B40, 35Q20, 76P05.}  

\maketitle


\section{Introduction}\label{sec.intro}

This paper is concerned with the proof of (explicit) spectral-gap estimates
of the linearized Boltzmann operator for gas mixtures in the case of hard and
Maxwellian potentials as well as
the exponential decay of solutions to a {\em multi-species} Boltzmann system.
Spectral-gap estimates and the large-time behavior of the {\em mono-species} 
Boltzmann equations were intensively studied in the literature, 
but are unknown for {\em multi-species} systems.
First, we review the literature for the mono-species case.

The study of the linearized collision operator,
in the spatially homogeneous and hard-potential case, goes back to 
Hilbert \cite{Hil12}. For this operator, Carleman \cite{Car57} proved the 
existence of a spectral gap. The results were extended by Grad \cite{Gra58} for 
hard potentials with cut-off. Baranger and Mouhot \cite{BaMo05}
derived constructive estimates in the hard-sphere case. For Maxwell
molecules, Fourier transform methods were employed in \cite{WUB70}
to achieve explicit spectral properties. 
A spectral-gap estimate for the linearized Boltzmann operator, 
consisting of the sum of the 
linearized collision operator and the transport operator, was first shown by
Ukai \cite{Uka74}. Improved estimates (in smaller spaces of Sobolev type), 
still for hard potentials, 
were established in \cite{MoNe06}. In \cite{MoSt07}, spectral-gap estimates for 
moderately soft potentials (without angular cut-off) were proved, improving and 
extending previous results by Pao \cite{Pao74}. Hypoelliptic estimates for
the linearized operator without cut-off can be found in \cite{AHL12} and
references therein. A spectral analysis with relaxed 
tail decay and regularity conditions on the solutions was performed 
recently in an abstract framework \cite{GMM10}. Dolbeault et al.\ \cite{DMS15}
derived exponential decay rates in weighted $L^2$ spaces, which improves previous
Sobolev estimates. For further references, we refer to \cite[Section 1.5]{MoSt07}.

Spectral properties of the linearized Boltzmann operator were already investigated
by Grad \cite{Gra63}. Based on these results, Schechter \cite{Sch66} located the
essential spectrum of the classical collision operator in $L^2$. The spectrum
of the Boltzmann operator for hard spheres was also analyzed in $L^p$ for 
$p\neq 2$; see \cite{Kla75}. We refer to the recent work \cite{Dud13} for 
further results in $L^p$ for $1\le p\le\infty$ and more references. 
A detailed analysis of the resolvent and spectrum of the linearized Boltzmann
operator can be found in \cite[Section~2.2]{UkYa06}. A complete analysis for
the essential and discrete spectra for the linearized 
collision operator with hard potentials was performed in \cite{Mou06a}.

All these results are valid for the linearized {\em mono-species} collision operator.
Our aim is to extend the spectral-gap analysis to the case of the linearized 
{\em multi-species}
Boltzmann system modeling an ideal gas mixture. This is achieved by generalizing
the coercivity method of \cite{MoNe06}, including
quantitative estimates on the spectral gap for the multi-species collision operator.
A crucial step of our analysis is the observation that the multi-species version
of the $H$-theorem implies conservation of mass for each species but conservation
of momentum and energy only for the sum of all species. 
As a consequence, we need to study carefully the ``cross-effects'' of the collisions, 
i.e., how collisions between different species act on distribution functions which 
are elements of the nullspace of the mono-species collision operator. The crucial 
step is to relate these ``cross-effects'' to the differences of momentum 
and energy. Before stating the main results, we introduce the kinetic setting.


\subsection{The Boltzmann equation}\label{sec.BE}

The evolution of a dilute ideal gas composed of $n\ge 2$ different species of
chemically non-interacting mono-atomic particles (see \cite{DMS05} for chemically
reacting gases) with the same particle mass can be modeled by the following 
system of Boltzmann equations, stated on the three-dimensional torus $\T^3$,
\begin{equation}\label{BE}
  \pa_t F_i + v\cdot\na_x F_i = Q_i(F), \quad t>0,
	\quad F_i(x,v,0)=F_{I,i}(x,v),\quad (x,v)\in\T^3\times\R^3,
\end{equation}
where $1\le i\le n$. The vector $F=(F_1,\ldots,F_n)$ is the distribution function
of the system, with $F_i$ describing the $i$th species. The variables
are the position $x\in\T^3$, the velocity $v\in\R^3$, and the time $t\ge 0$.
The right-hand side of the kinetic equation in \eqref{BE} is the
$i$th component of the nonlinear collision operator, defined by
$$
  Q_i(F) = \sum_{j=1}^n Q_{ij}(F_i,F_j), \quad 1\le i\le n,
$$
where $Q_{ij}$ models interactions between particles of the same ($i=j$)
or of different species ($i\neq j$),
$$
  Q_{ij}(F_i,F_j)(v) = \int_{\R^3\times\S^2}B_{ij}(|v-v^*|,\cos\vartheta)
	(F_i'F_j'^* - F_iF_j^*)dv^*d\sigma,
$$
with the abbreviations $F_i'=F_i(v')$, $F_i^*=F_i(v^*)$, $F_i'^*=F_i(v'^*)$,
the three-dimensional unit sphere $\S^2$, and 
\begin{equation}\label{vprime}
  v' = \frac{v+v^*}{2} + \frac{|v-v^*|}{2}\sigma, \quad
	v'^* = \frac{v+v^*}{2} - \frac{|v-v^*|}{2}\sigma
\end{equation}
are the pre-collisional velocities depending on the post-collisional velocities
$(v,v^*)$. These expressions follow from the fact that we assume the collisions to be
elastic, i.e., the momentum and kinetic energy are conserved on the microscopic
level:
\begin{equation}\label{cons}
  v' + v'^* = v + v^*, \quad \frac12|v'|^2 + \frac12|v'^*|^2 
	= \frac12|v|^2 + \frac12|v^*|^2.
\end{equation}
The collision kernels $B_{ij}$ are nonnegative functions of the modulus $|v-v^*|$
and the cosine of the deviation angle $\vartheta\in[0,\pi]$, defined by
$\cos\vartheta=\sigma\cdot(v-v^*)/|v-v^*|$. 

Although we will analyze a linearized version of $Q_{i}$, let us
recall the main properties of the nonlinear operator $Q_i$.  Using the
techniques from \cite[pp.~36-42]{CIP94}, it is not difficult to see
that $Q:=(Q_1,\dots,Q_n)$ conserves the mass of each species but
only total momentum and energy, i.e.
$$
  \int_{\R^3}\sum_{i,j=1}^n Q_{ij}(F_i,F_j)\psi_i(v)dv = 0
$$
if and only if $\psi(v)\in\mbox{span}\{e^{(1)},\ldots,e^{(n)},v_1{\mathbf 1},
v_2{\mathbf 1},v_3{\mathbf 1},|v|^2{\mathbf 1}\}$, where $e^{(i)}$ is the $i$th
unit vector in $\R^n$ and ${\mathbf 1}=(1,\ldots,1)\in\R^n$. It is shown in
\cite{DMS05} that $Q$ satisfies a multi-species version of the $H$-theorem
which implies that any local equilibrium, i.e.\ any function $F$ being the
maximum of the Boltzmann entropy, has the form of a local Maxwellian
$M_{\rm loc}=(M_{{\rm loc},1},\ldots,M_{{\rm loc},n})$ with
$$
  F_i(x,v,t) = M_{{\rm loc},i}(x,v,t) 
	= \frac{\rho_{{\rm loc},i}(x,t)}{(2\pi\theta_{\rm loc}(x,t))^{3/2}}
	\exp\left(-\frac{|v-u_{\rm loc}(x,t)|^2}{2\theta_{\rm loc}(x,t)}\right),
$$
where, introducing the total local density 
$\rho_{\rm loc}=\sum_{i=1}^n\rho_{{\rm loc},i}$,
$$
  \rho_{{\rm loc},i} = \int_{\R^3}F_idv, \quad
	u_{\rm loc} = \frac{1}{\rho_{\rm loc}}\sum_{i=1}^n\int_{\R^3}F_i vdv, \quad
	\theta_{\rm loc} = \frac{1}{3\rho_{\rm loc}}\sum_{i=1}^n\int_{\R^3}F_i|v-u|^2 dv
$$
are the (local) masses of the species, the total momentum and total
energy, respectively.

On the other hand, the global equilibrium, which is the unique
stationary solution $F$ to \eqref{BE}, is given by
$M=(M_1,\ldots,M_n)$ with
$$ 
  F_i(x,v) = M_i(v) = \frac{\rho_{\infty,i}}{(2\pi\theta_\infty)^{3/2}}
  \exp\left(-\frac{|v-u_\infty|^2}{2\theta_\infty}\right),
$$
where now, setting $\rho_\infty=\sum_{i=1}^n\rho_{\infty,i}$,
$$
  \rho_{\infty,i} = \int_{\T^3\times\R^3}F_{i}dxdv, \quad
	u_\infty = \frac{1}{\rho_\infty}\int_{\T^3\times\R^3}F_{i}vdxdv, \quad
	\theta_\infty = \frac{1}{3\rho_\infty}\int_{\T^3\times\R^3}F_{i}|v-u|^2dxdv
$$
do not depend on $(x,t)$. By translating and scaling the coordinate system,
we may assume that $u_\infty=0$ and $\theta_\infty=1$ 
such that the global equilibrium becomes
\begin{equation}\label{M}
  M_i(v) = \frac{\rho_{\infty,i}}{(2\pi)^{3/2}}e^{-|v|^2/2}, \quad 1\le i\le n.
\end{equation}


\subsection{Linearized Boltzmann collision operator}\label{sec.lin}

We assume that the distribution function $F_i$ is close to the global
equilibrium such that we can write $F_i = M_i + M_i^{1/2}f_i$ for some
small perturbation $f_i$, where $M_i$ is given by \eqref{M}. Then,
dropping the small nonlinear remaining term, $f_i$ satisfies
the linearized equation
\begin{equation}\label{LBE}
  \pa_t f_i + v\cdot\na_x f_i = L_i(f), \quad t>0,\quad
	f_i(x,v,0)=f_{I,i}(x,v), \quad (x,v)\in\T^3\times\R^3,
\end{equation}
for $1\le i\le n$, where $f=(f_1,\ldots,f_n)$ and
the $i$th component of the linearized collision operator 
$L=(L_1,\ldots,L_n)$ is given by
$$
  L_i(f) = \sum_{j=1}^n L_{ij}(f_i,f_j), \quad 1\le i\le n,
$$
with
\begin{align}
  L_{ij}(f_i,f_j) &= M_i^{-1/2}\big(Q_{ij}(M_i,M_j^{1/2}f_j)
	+ Q_{ij}(M_i^{1/2}f_i,f_j)\big) \nonumber \\
	&= \int_{\R^3\times\S^2}B_{ij}M_i^{1/2}M_j^*(h_i'+h_j'^*-h_i-h_j^*)dv^*d\sigma,
	\quad h_i := M_i^{-1/2}f_i. \label{Lij}
\end{align}
Here, we have used $M_i^{'*} M_j' = M_i^* M_j$ for any
$i$, $j$, which follows from \eqref{cons}. Notice that we have
chosen the linearization considered in, e.g.,
\cite{MoNe06,UkYa06}. Another linearization is given by
$F_i=M_i+M_ig_i$ (see, e.g., \cite{Mou06}), namely
$\widetilde
L_{ij}(g_i,g_j)=M_i^{-1}(Q_{ij}(M_i,M_ig_i)+Q_{ij}(M_ig_i,M_i))$.
\blue{This choice gives the same results as with the linearization \eqref{Lij}
since both linearizations correspond to the same space of solutions,
but it turned out that the computations are easier using \eqref{Lij}.}

The linearized Boltzmann system satisfies an $H$-theorem
with the linearized entropy $H(f) = \frac12\sum_{i=1}^n\int_{\R^3}f_i^2dv$, 
$$
  -\frac{dH}{dt} = -\sum_{i=1}^n\int_{\R^3}f_i L_i(f)dv =: -(f,L(f))_{L^2_v} \ge 0,
$$
where $(\cdot,\cdot)_{L_v^2}$ is the scalar product on
$L_v^2:=L^2(\R^3;\R^n)$.  We will prove in Lemma \ref{lem.H} that
$(f,L(f))_{L^2_v}=0$ if and only if $M_i^{-1/2}f_i$ lies in
$\mbox{span}\{e^{(1)},\ldots,e^{(n)},v_1{\mathbf 1}, v_2{\mathbf 1},
v_3{\mathbf 1},|v|^2{\mathbf 1}\}$,
which is the null space $\ker(L)$ of the linear operator $L$.  The
main aim of this paper is to show that, under suitable assumptions on
the collision kernels, there exists a constant $\lambda>0$, which can
be computed explicitly, such that for all suitable functions $f$,
$-(f,L(f))_{L_v^2} \ge \lambda\|f-\varPi^L(f)\|_\H^2$, where
$\varPi^L$ is the projection onto $\ker(L)$ and $\H$ is a subset of
$L_v^2$ (see Theorem \ref{thm.spec} for the precise statement). This
spectral-gap estimate, together with hypocoercivity techniques, allows
us to conclude that exponential decay of the solutions $f(t)$
towards the global equilibrium holds (see Theorem \ref{thm.conv}).


\subsection{Assumptions on the collision kernels}\label{sec.assump}

We impose the following assumptions on the collision kernels $B_{ij}$
arising in \eqref{Lij}. 

\renewcommand{\labelenumi}{(A\theenumi)}
\begin{enumerate}
\item The collision kernels satisfy 
$$
  B_{ij}(|v-v^*|,\cos\vartheta) = B_{ji}(|v-v^*|,\cos\vartheta)
	\quad\mbox{for }1\le i,j\le n.
$$
\item The collision kernels decompose in the kinetic part $\Phi_{ij}\ge 0$ 
and the angular part $b_{ij}\ge 0$ according to
$$
  B_{ij}(|v-v^*|,\cos\vartheta) = \Phi_{ij}(|v-v^*|)b_{ij}(\cos\vartheta),
	\quad 1\le i,j\le n.
$$
\item For the kinetic part, there exist constants $C_1$, $C_2>0$,
$\gamma\in[0,1]$, and $\delta\in(0,1)$ such that for all $1\le i,j\le n$ and $r>0$,
$$
  C_1 r^\gamma\le \Phi_{ij}(r) \le C_2(r + r^{-\delta}).
$$
\item For the angular part, there exist constants $C_3$, $C_4>0$ such that
for all $1\le i,j\le n$ and $\vartheta\in[0,\pi]$,
$$
  0<b_{ij}(\cos\vartheta)\le C_3|\sin\vartheta|\,|\cos\vartheta|, \quad
	b'_{ij}(\cos\vartheta)\le C_4.
$$
Furthermore, 
$$
  C^b := \min_{1\le i\le n}\inf_{\sigma_1,\sigma_2\in\S^2}\int_{\S^2}\min\big\{
	b_{ii}(\sigma_1\cdot\sigma_3),b_{ii}(\sigma_2\cdot\sigma_3)\big\}d\sigma_3 > 0.
$$
\item For all $1\le i,j\le n$, 
$b_{ij}$ is even in $[-1,1]$ and the mapping $v\mapsto\Phi_{ij}'(|v|)$ on
$\R^3$ is locally integrable on $\R^3$ and bounded as $|v|\to\infty$.
\item There exists $\beta>0$ such that for all $1\le i,j\le n$, $s>0$, 
and $\sigma\in[-1,1]$, we have $B_{ij}(s,\sigma)\le \beta B_{ii}(s,\sigma)$.
\end{enumerate}

Following \cite{MoNe06}, since the functions $b_{ij}$ are integrable, we define
\begin{equation}\label{ellb}
  \ell^b := \min_{1,\le i,j\le n}\int_0^\pi b_{ij}(\cos\theta)\sin\theta d\theta > 0.
\end{equation}

Let us discuss these assumptions. The first hypothesis (A1) means that
the collisions are micro-reversible. Assumption (A2) is satisfied, for
instance, for collision kernels derived from interaction potentials
behaving like inverse-power laws. The lower bound in hypothesis (A3)
includes power-law functions $\Phi_{ij}(r)=r^\gamma$ with $\gamma >0$
(hard potential) and $\gamma=0$ (Maxwellian molecules). The assumption
$\gamma\ge 0$ is crucial since the linearized collision operator in
the mono-species case for soft potentials ($\gamma<0$) with angular
cut-off has no spectral gap \cite{BaMo05}; however, degenerate
spectral-gap estimates are possible \cite{GoPo86,Mou04}. 
The upper bound in (A3) means that the kinetic part is of
restricted growth for both small and large values of $|v-v^*|$.  In
hypothesis (A4), the upper bound for $b_{ij}$  implies Grad's cut-off
assumption. The positivity of $C^b$ in Assumption (A4) 
is used in the constructive
proof of the multi-species spectral-gap estimate (Theorem \ref{thm.spec})
via the {\em mono-species} spectral-gap estimate which depends on $C^b$;
see also the proofs of Theorem 1.1 in \cite{BaMo05} and
Theorem 6.1 in \cite{Mou04}. The positivity of $C^b$ 
is satisfied for the main physical case of a collision kernel satisfying Grad's
cut-off, i.e.\ for hard spheres with $B_{ij}(|v-v^*|,\cos \vartheta) = |v-v^*|$. 
Conditions (A1)-(A4) are also imposed in \cite{BaMo05,Mou04,Mou06} 
for the linearized mono-species Boltzmann operator.

  Assumption (A5) imposes technical conditions needed to verify the
  abstract hypotheses in \cite{MoNe06}. More precisely, the evenness
  of $b_{ij}$ is employed to show hypothesis (H2) (see section
  \ref{sec.conv}) and the properties on $\Phi_{ij}'$ are used to
  verify \eqref{H1.2} in hypothesis (H1).  The conditions on
  $\Phi_{ij}'$ are satisfied for hard and Maxwellian power-law
  potentials $\Phi_{ij}(r) = r^\gamma$ with exponent
  $\gamma\in [0,1]$, for instance.  Finally, condition (A6) states
  that the ratio of the off-diagonal and diagonal collision kernels
  can be bounded uniformly from above by a constant $\beta>0$. This
  hypothesis will be needed for the explicit computation of the
  constants in Theorems \ref{thm.spec} and \ref{thm.conv}.  More
  precisely, (A6) allows us to estimate the
  mono-species part of the collision operator using the computation of
  \cite{Mou04}; see Lemma \ref{lem.specLm}.


\subsection{Notation and definitions}\label{sec.not}

\blue{We call $\mbox{Dom}(F)$ the domain of an operator $F$ and 
$\mbox{Im}(f)$ the image of a mapping $f$.}
We introduce the spaces $L_v^2=L^2(\R^3;\R^n)$, $L^2_{x,v}=L^2(\T^3\times\R^3;\R^n)$,
$H^1_{x,v}=H^1(\T^3\times\R^3;\R^n)$, and
\begin{equation}\label{H}
  \begin{aligned}
  \H &= \bigg\{f\in L_v^2:\|f\|_\H^2 = \sum_{i=1}^n\int_{\R^3}f_i^2\nu_i dv 
	< \infty\bigg\}, \\
  \D &= \bigg\{f\in L_v^2:\|f\|_\H^2 =
  \sum_{i=1}^n\int_{\R^3}f_i^2\nu_i ^2 dv
	< \infty\bigg\}.
  \end{aligned}
\end{equation}
Here, $\nu_i$ is the collision frequency, given by 
\begin{equation}\label{nu}
  \nu_i(v) = \sum_{j=1}^n\int_{\R^3\times\S^2}B_{ij}(|v^*-v|,\cos\vartheta)M_j^*
	dv^*d\sigma, \quad i=1,\ldots,n,
\end{equation}
\blue{For Maxwellian modelcules $\Phi_{ij}(r)=\mbox{const.}$, the collision
frequency is constant but for strictly hard potentials $\Phi_{ij}(r)=r^\gamma$ 
with $0<\gamma\le 1$, $\nu_i$ is unbounded. In fact, it satisfies
$\nu_0(1+|v|)^\gamma\le \nu_i(v)\le \nu_1(1+|v|)^\gamma$ for some
constants $\nu_1\ge \nu_0>0$ \cite[p.~991]{MoNe06}. 
In the physically most relevant case of hard spheres ($\gamma=1$, $b_{ij}=1$),
the collision frequency can be computed explicitly, see formula (2.13) in
\cite[Section 7.2]{CIP94}. For more properties of
the collision frequencies, we refer to \cite[Section III.3]{Cer69}.}
If the collision frequencies are bounded, $\H=L_v^2$. Generally,
$\nu_i$ is unbounded and so, $\H$ is a proper subset of $L_v^2$.
The norm on $L_v^2$ (and similarly for the other spaces) is defined by
$$
  \|f\|_{L_v^2}^2 = \sum_{i=1}^n\int_{\R^3}f_i^2dv \quad\mbox{for }
	f=(f_1,\ldots,f_n)\in L_v^2.
$$

We distinguish the following linear operators.  We define the operator
$\Lambda=(\Lambda_1,\ldots,\Lambda_n):\mbox{Dom}(\Lambda)\to L_v^2$
by 
$$
  \Lambda_i(f)=\nu_i f_i, \quad i=1,\ldots,n,
$$
where $\mbox{Dom}(\Lambda)=\{f\in L_{v}^2:\Lambda f\in L_{v}^2\}=\D$.  It
is closed, densely defined, selfadjoint and, by Lemma \ref{lem.nu}
below, coercive.  The linearized collision operator
$L:\mbox{Dom}(L)\to L_v^2$, introduced in section \ref{sec.lin},
can be written as $L=K-\Lambda$, where $K:=L+\Lambda$, \blue{or, more explicitly,
$$
  K_i(f) = \sum_{j=1}^n\int_{\R^3\times\S^2}B_{ij}M_i^{1/2}M_j^*(h_i'+h_j'^*-h_j^*)
	dv^*d\sigma, \quad i=1,\ldots,n.
$$} 
It was shown in \cite{BGPS13}
that $K$ is a compact operator in $L_v^2$.  Thus,
$\mbox{Dom}(L)=\mbox{Dom}(\Lambda)=\D$ and $L$ is closed and
densely defined.  Furthermore, $L$ is nonpositive and selfadjoint on
$L_v^2$. We define the transport operator
$$
  T=v\cdot\na_x:\mbox{Dom}(T)\to L_v^2, 
$$
where $\mbox{Dom}(T)=\{f\in L_{x,v}^2:v\cdot\na_x f\in L_{x,v}^2\}$.
Finally, we consider the linearized Boltzmann operator
$$
  B=L-T:\mbox{Dom}(B)\to L_v^2,
$$
which is unbounded, closed, and densely defined
with $\mbox{Dom}(B)=\mbox{Dom}(L)\cap\mbox{Dom}(T)$.

We denote by $\ker(A)$ and $\ran(A)$ the kernel and range of a linear operator
$A$, respectively. Its resolvent set is denoted by $\rho(A)$ and its spectrum
by $\sigma(A)=\C\backslash\rho(A)$. For a linear unbounded operator $A$
with $\sigma(A)\subset(-\infty,0]$, we say that $A$ has a spectral gap when the
distance between 0 and $\sigma(A)\backslash\{0\}$ is positive.
Finally, the essential spectrum of $A$ is defined as the set of all 
complex numbers $\lambda\in\C$ such that $A-\lambda I$ is not Fredholm, where
$I$ is the identity operator. We refer to section \ref{sec.spec1} for
details regarding this definition.


\subsection{Main results}\label{sec.main}

In this subsection, we state the main results of the paper.  The first
result is a geometric property of the essential spectrum of the
linearized collision operator $L$ and the linearized Boltzmann
operator $B=L-T$.

\begin{theorem}[Essential spectrum of $L$ and $L-T$]\label{thm.ess}
Let the collision kernels $B_{ij}$ satisfy assumptions (A1)-(A4) and set 
$J=\cup_{i=1}^n \mathrm{Im}(\nu_i)\subset[\nu_0,\infty)$, where
$\nu_0=\min_{i=1,\ldots,n}\sup_{v\in\R^3}\nu_i(v)$ $>0$ (see Lemma \ref{lem.nu}).
Then
$$
  \sigma_{\rm ess}(L)=-J, \quad
	\sigma_{\rm ess}(L-T) = \{\lambda\in\C:\Re(\lambda)\in -J\}.
$$
\end{theorem}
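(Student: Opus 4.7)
The approach is to combine the compactness of $K = L + \Lambda$ on $L_v^2$ (established in \cite{BGPS13}) with a Fourier decomposition in $x$ that diagonalises the transport operator, and then invoke Weyl-type stability of the essential spectrum under compact perturbations.

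\textbf{Essential spectrum of $L$.} I would first analyse $-\Lambda$. On $L_v^2$ it is the vector multiplication operator by $(\nu_1,\dots,\nu_n)$, so its spectrum equals the essential range of this vector symbol. Under (A1)--(A4) each $\nu_i$ depends only on $|v|$ (by rotational invariance of the Maxwellians $M_j$ and of $B_{ij}$), is continuous, bounded below by a positive constant by Lemma \ref{lem.nu}, and tends to $\infty$ as $|v|\to\infty$ when $\gamma>0$, so $\mathrm{Im}(\nu_i)$ is closed and all its level sets carry positive measure. Consequently every point of $\mathrm{Im}(\nu_i)$ belongs to $\sigma_{\rm ess}(\Lambda_i)$ and $\sigma_{\rm ess}(\Lambda) = J$. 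Since $K$ is compact on $L_v^2$, the classical Weyl theorem yields $\sigma_{\rm ess}(L) = \sigma_{\rm ess}(-\Lambda + K) = -J$.

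\textbf{Essential spectrum of $L-T$.} On $L_{x,v}^2$ the operator $K$ is no longer compact, so I would pass to the Fourier identification $L_{x,v}^2 \cong \bigoplus_{k\in\Z^3} L_v^2$, in which both $L$ and $T$ are block-diagonal. Explicitly,
$$
L - T = \bigoplus_{k\in\Z^3} B_k, \qquad B_k := L - ik\cdot v = -(\Lambda + ik\cdot v) + K,
$$
with $K$ still compact on each fibre $L_v^2$. Weyl's theorem then gives $\sigma_{\rm ess}(B_k) = \sigma_{\rm ess}\bigl(-(\Lambda + ik\cdot v)\bigr)$, which is the essential range of the multiplier $v\mapsto-(\nu_i(v)+ik\cdot v)_i$. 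Using spherical symmetry, as $v$ sweeps the sphere of radius $r$ the quantity $-k\cdot v$ fills the interval $[-|k|r,|k|r]$, so
$$
\sigma_{\rm ess}(B_k) = \bigcup_{i=1}^n \bigl\{ -\nu_i(r) + is : r\ge 0,\ s\in[-|k|r,|k|r]\bigr\},
$$
writing $\nu_i(r)$ for the radial profile. Invoking the direct-integral characterisation of the essential spectrum for a block-diagonal closed operator, $\sigma_{\rm ess}(L-T) = \overline{\bigcup_{k\in\Z^3}\sigma_{\rm ess}(B_k)}$; letting $|k|\to\infty$ the intervals $[-|k|r,|k|r]$ exhaust $\R$ for each fixed $r>0$, yielding $\sigma_{\rm ess}(L-T) = \{\lambda\in\C : \Re\lambda\in -J\}$.

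\textbf{Main obstacle.} The principal technical step is the identification $\sigma_{\rm ess}\bigl(\bigoplus_k B_k\bigr) = \overline{\bigcup_k \sigma_{\rm ess}(B_k)}$. The inclusion $\supseteq$ is immediate via Weyl sequences supported on a single fibre, but the reverse inclusion requires ruling out that discrete eigenvalues of the $B_k$ accumulate as $|k|\to\infty$ and generate extra essential spectrum. I would control this by a uniform estimate: for $\lambda$ outside the strip $\{\Re\lambda\in -J\}$, the multiplication resolvent $(\lambda+\Lambda+ik\cdot v)^{-1}$ is bounded uniformly in $k$, so a Neumann-series/Fredholm argument shows $\lambda - B_k$ is Fredholm of index zero uniformly in $k$, hence $\lambda\notin\sigma_{\rm ess}(L-T)$. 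This follows the strategy of the mono-species analysis in \cite[Section~2.2]{UkYa06}, which adapts directly to the multi-species setting because all fibres share the same compact $K$ and the same coercive multiplier $\Lambda$.
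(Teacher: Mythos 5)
Your treatment of $\sigma_{\rm ess}(L)$ is essentially the paper's: compute $\sigma_{\rm ess}(\Lambda)$ (the paper constructs Gaussian Weyl sequences rather than invoking essential-range terminology, but this is cosmetic) and apply Weyl's theorem using compactness of $K$ on $L_v^2$. One small caveat: in the Maxwellian case $\gamma=0$ the profile $\nu_i(|v|)$ may be bounded, and $\mathrm{Im}(\nu_i)$ need not be closed, so be careful with "$\mathrm{Im}(\nu_i)$ is closed" — the paper works with $\overline{J}$ at intermediate steps.

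For $\sigma_{\rm ess}(L-T)$ you follow a genuinely different route from the paper, and it has a gap at the step you yourself flag as the main obstacle. The paper does \emph{not} compute $\sigma_{\rm ess}(B_k)$ fibre by fibre and take a union; it first determines $\sigma_{\rm ess}(\Lambda+T)$ directly on $L_{x,v}^2$ by explicit singular sequences and a resolvent estimate (Steps 3--4), then shows $K$ is \emph{relatively compact} with respect to $\Lambda+T$ (Step 5) via Ukai's Fourier decomposition, so that the extended Weyl theorem \cite[Sec.~IV.5.6, Thm.~5.35]{Kat80} transfers the essential spectrum in one stroke. This bypasses the problematic identity $\sigma_{\rm ess}(\bigoplus_k B_k)=\overline{\bigcup_k\sigma_{\rm ess}(B_k)}$, which is false in general: each $\lambda-B_k$ being Fredholm of index zero does not prevent $\ker(\lambda-B_k)\neq\{0\}$ for infinitely many $k$, which would immediately put $\lambda$ in $\sigma_{\rm ess}(\bigoplus_k B_k)$.

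The specific flaw in your resolution of this obstacle is that uniform boundedness of $(\lambda+\Lambda+ik\cdot v)^{-1}$ is not enough for a Neumann series: you would need $\|K(\lambda+\Lambda+ik\cdot v)^{-1}\|_{\mathscr{L}(L_v^2)}<1$, and a fixed bound on the resolvent times the fixed norm $\|K\|$ gives no such smallness when $\Re\lambda$ is merely close to $-\overline J$. The decisive ingredient — used by the paper in Step 5 — is the \emph{decay} estimate from \cite[Prop.~2.2.6]{UkYa06}, $\|K(\Lambda+2\pi\I v\cdot m-z)^{-1}\|_{\mathscr{L}(L_v^2)}^2\le c_1(1+|m|)^{-\alpha}$ with $\alpha\in(0,1)$. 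With this decay, $\lambda-B_k$ is honestly invertible with uniformly bounded inverse for all but finitely many $k$; the finitely many exceptional fibres contribute only a finite-dimensional kernel and cokernel, which is what you actually need to conclude Fredholmness of $\lambda-(L-T)$. So your strategy can be made to work, but "uniform boundedness of the resolvent" and "Fredholm of index zero uniformly in $k$" are not the right statements — you must invoke the Ukai--Yang decay explicitly and upgrade Fredholmness to invertibility for large $|k|$. The paper's relative-compactness route uses exactly the same estimate but is structurally cleaner because it never needs to reason about direct sums of Fredholm operators.
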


\begin{remark}\rm We observe that if
$\lim_{|v|\to\infty}\nu_i(v)=\infty$ for $i=1,\ldots,n$ then
$$
  \sigma_{\rm ess}(L) = (-\infty,-\nu_0], \quad
	\sigma_{\rm ess}(L-T) = \{\lambda\in\C:\Re(\lambda)\le-\nu_0\}.
$$
Indeed, under the assumption $\nu_i(v)\to\infty$ as $|v|\to\infty$, the continuity
of $\nu_i$, and the Weierstra{\ss} theorem show that $J=[\nu_0,\infty)$.
Thus, the essential spectrum of the linearized {\em multi-species} collision
operator is very similar to the {\em mono-species} operator, 
where $\nu_0$ corresponds to the infimum in $\R^3$ of the single collision frequency;
see \cite[Section 3]{Mou06} and \cite[Prop.~3.1]{Mou06a}.
\qed
\end{remark}

The proof of Theorem \ref{thm.ess} is based on perturbation theory 
\cite[Chap.~IV]{Kat80} and is similar to the proof for the mono-species
collision operator \cite{UkYa06}, \blue{but we show new explicit spectral-gap 
estimates related to the particular structure of the kernel in the multi-species
case.}
More precisely, we write $L=K-\Lambda$ as described in
section \ref{sec.not}. It turns out that $K=L+\Lambda$ is compact on
$L_v^2$ (see section \ref{sec.spec1} for details). Weyl's theorem
\cite[Theorem S]{GuWe69} states that the essential spectrum of $L=K-\Lambda$
coincides with that of $-\Lambda$. Thus it remains to show that
$\sigma_{\rm ess}(\Lambda)=J$. This is done by using Weyl's singular
sequences, which allow for a sufficient and necessary condition for 
$\lambda\in\C$ being an element of the essential spectrum of the selfadjoint 
operator $\Lambda$.

The proof of the second statement in Theorem \ref{thm.ess}
is more involved since $K$ is not compact on $L_{x,v}^2$ and hence, Weyl's theorem
cannot be applied directly. The idea
is to employ an extended Weyl theorem, which states that the essential spectrum is
conserved under a {\em relatively compact} perturbation 
\cite[Section IV.5.6, Theorem 5.35]{Kat80}. Indeed, if $K$ is relatively 
compact with respect to $\Lambda+T$ then
$\sigma_{\rm ess}(L-T)=\sigma_{\rm ess}(K-(\Lambda+T))
=-\sigma_{\rm ess}(\Lambda+T)$, and it remains to compute the essential
spectrum of $\Lambda+T$.
\medskip

The next theorem concerns an explicit spectral-gap estimate. It is the main
result of the paper.

\begin{theorem}[Explicit spectral-gap estimate]\label{thm.spec}
Let the collision kernels $B_{ij}$ satisfy assumptions (A1)-(A4).
Then there exists a constant $\lambda>0$ such that
\begin{equation}\label{spec.gap}
  -(f,L(f))_{L_v^2} \ge \lambda\|f-\varPi^L(f)\|_\H^2
	\quad\mbox{for all }f\in\D,
\end{equation}
where $\varPi^L$ is the projection onto the null space $\ker(L)$.  If
additionally hypothesis (A6) holds, the constant $\lambda$ 
can be computed explicitly:
$$
  \lambda = \frac{\eta D^b}{8C_k}, \quad
	\eta = \min\left\{1,\frac{4C^mC_k}{16C_k+D^b}\right\},
$$
where $C^m$, $D^b$, and $C_k$ are defined in \eqref{Cm}, \eqref{D}, and \eqref{Ck}, 
respectively.
\end{theorem}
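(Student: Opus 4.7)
The plan is to exploit the fact that the null space of the full operator $L$ is strictly smaller than the null space of its diagonal part, and to derive the spectral gap by combining a mono-species estimate with an estimate on the cross-species contribution. First I would split $L = L^m + L^c$, where $L^m_i(f) = L_{ii}(f_i, f_i)$ collects the diagonal mono-species terms and $L^c_i(f) = \sum_{j \neq i} L_{ij}(f_i, f_j)$ gathers the off-diagonal contributions. By the linearized $H$-theorem, both $-L^m$ and $-L^c$ are non-negative quadratic forms on $\D$. Inspection of the kernels shows that $\ker(L^m)$ strictly contains $\ker(L)$: the former consists of species-dependent Maxwellian perturbations $f_i = M_i^{1/2}(a_i + b_i \cdot v + c_i |v|^2)$ with independent $(b_i,c_i)$, while $\ker(L)$ forces $b_i$ and $c_i$ to be independent of $i$.

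The first step is to apply the mono-species spectral-gap estimate of Mouhot \cite{Mou04} and Baranger--Mouhot \cite{BaMo05} to each $L_{ii}$. Under assumption (A4), in particular the positivity of $C^b$, this yields the constant $C^m>0$ appearing in the statement such that
$$-(f, L^m(f))_{L_v^2} \ge C^m \|f - \varPi^m(f)\|_\H^2,$$
where $\varPi^m$ is the orthogonal projection onto $\ker(L^m)$. This controls the component of $f$ orthogonal to the enlarged null space. The central step is then to prove a cross-species coercivity estimate on the subspace of $\ker(L^m)$ orthogonal to $\ker(L)$: for every $g\in\ker(L^m)$ written as $g_i = M_i^{1/2}(a_i + b_i\cdot v + c_i|v|^2)$, one shows
$$-(g, L^c(g))_{L_v^2} \ge D^b \sum_{i<j} \bigl( |b_i - b_j|^2 + |c_i - c_j|^2 \bigr)$$
with an explicit $D^b>0$. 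This is obtained by substituting the Maxwellian ansatz into the weak form of $L_{ij}$, using micro-reversibility (A1) and the elastic conservation laws \eqref{cons} to isolate the differences $b_i - b_j$ and $c_i - c_j$, and then bounding the resulting quadratic form from below by a pointwise angular integral; hypothesis (A6) converts the bound into one involving only the diagonal kernels.

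To assemble both pieces I would decompose $f = (f - \varPi^m(f)) + (\varPi^m(f) - \varPi^L(f)) + \varPi^L(f)$ and expand $-(f, L(f))_{L_v^2} = -(f, L^m(f))_{L_v^2} - (f, L^c(f))_{L_v^2}$. The mono-species dissipation controls the first summand in the $\H$-norm, and applying the cross-coercivity estimate to $g = \varPi^m(f) - \varPi^L(f)$ controls the second summand by the momentum/energy mismatches. The mixed cross terms involving $f - \varPi^m(f)$ are absorbed either by Young's inequality against the mono-species dissipation, or via the bound $C_k$ comparing the $\H$- and $L_v^2$-norms on $K$. Optimising a convex-combination parameter $\eta\in(0,1]$ balances both dissipations and reproduces precisely $\lambda = \eta D^b/(8 C_k)$ with $\eta = \min\{1, 4 C^m C_k/(16 C_k + D^b)\}$. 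For the purely existential statement (without (A6)), the standard compactness/contradiction argument using $L = K - \Lambda$ applies: $K$ is compact on $L_v^2$ by \cite{BGPS13}, $\Lambda$ is coercive on $\D$, and a normalized minimizing sequence in $\ker(L)^\perp$ would yield a non-trivial element of $\ker(L)\cap\ker(L)^\perp$, a contradiction.

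The main obstacle is the cross-species coercivity estimate. The quantity $-(g, L^c(g))_{L_v^2}$ vanishes whenever $b_i \equiv b$ and $c_i \equiv c$ are species-independent, so no slack is available: the positive lower bound must be extracted by a careful rewriting of the cross collision integral making the differences $b_i - b_j$ and $c_i - c_j$ appear explicitly as the only driving forces of dissipation. This requires combining the change of variables $v \leftrightarrow v^*$ with the elastic collision formulae \eqref{vprime} and the Maxwellian identity $M_i^{'*} M_j' = M_i^* M_j$ so that the cross collision form acquires a positive-semidefinite quadratic structure weighted by the positive symmetric factor $B_{ij} M_i M_j^*$, from which the sharp constant $D^b$ can then be read off without loss.
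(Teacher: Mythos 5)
Your plan matches the paper's constructive proof (Section \ref{sec.spec2}) almost exactly: split $L=L^m+L^b$, invoke the mono-species spectral gap to obtain $C^m$ (Lemma \ref{lem.specLm}), establish cross-species coercivity on $\ker(L^m)$ producing the differences of bulk velocities and energies (Lemma \ref{lem.Lb}), absorb the orthogonal parts via Young's inequality (Lemma \ref{lem.ortho}), and recombine via a finite-dimensional norm-equivalence constant $C_k$ on $\ker(L^m)$ (Lemma \ref{lem.diff}); your non-constructive aside is the paper's Lemma \ref{lem.spec.gap}.

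Two small misattributions worth fixing. First, (A6) is \emph{not} used to produce $D^b$: the constant $D^b$ in \eqref{D} is intrinsic to the cross kernels $B_{ij}$ and is positive without (A6). Assumption (A6) enters only in Lemma \ref{lem.specLm}, where one compares $\nu_{ii}$ with the full collision frequency $\nu_i$ (via $\nu_i\le(\beta\rho_\infty/\rho_{\infty,i})\nu_{ii}$) to express the mono-species spectral gap in the $\H$-norm and thereby make $C^m$ explicit. Second, your phrase that $C_k$ compares the $\H$- and $L_v^2$-norms ``on $K$'' should read ``on $\ker(L^m)$'': $C_k$ in \eqref{Ck} is precisely the constant of the equivalence $\|g\|_\H^2 \le C\|g\|_{L_v^2}^2$ on the finite-dimensional space $\ker(L^m)$, and it is what allows one to lower-bound $\sum_{i,j}(|u_i-u_j|^2+(e_i-e_j)^2)$ by $C_k^{-1}(\|f-\varPi^L(f)\|_\H^2 - 2\|f-\varPi^m(f)\|_\H^2)$; it has nothing to do with the compact operator $K$. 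Neither point changes the validity of your plan, which is the paper's proof.
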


Note that the constant $C^m$ depends on the mono-species spectral-gap
constant $C^b$ via \eqref{Cm} below.
We present two proofs of this theorem. The first proof is non-constructive
and relies on an abstract functional theoretical argument, based on the
decomposition $L=K-\Lambda$ and Weyl's perturbation theorem. This abstract
spectral-gap estimate is proved in Lemma \ref{lem.spec.gap}.
The second proof provides a constructive spectral-gap estimate, generalizing
the result in \cite{Mou06} (also see \cite[Theorem 6.1]{Mou04}) from the
mono-species to the multi-species case. For this, we split the operator $L=L^m+L^b$
in the mono-species part $L^m=(L^m_1,\ldots,L^m_n)$ and the bi-species part 
$L^b=(L^b_1,\ldots,L^b_n)$, 
$$
  L^m_i(f_i) = L_{ii}(f_i,f_i), \quad L_{i}^b(f) = \sum_{j\neq i}L_{ij}(f_i,f_j).
$$
The proof consists of four main steps.

{\em Step 1: Coercivity of the mono-species operator $L^m$.}
The bi-species part of $L$ satisfies $-(f,L^b(f))_{L_v^2}\ge 0$
for all $f\in\D$. Furthermore,
the results of \cite[Theorem 6.1]{Mou04} show that for the mono-species part,
\begin{equation}\label{coerc.Lm}
  -(f,L^m(f))_{L_v^2} \ge C^m\|f-\varPi^{m}(f)\|_\H^2 \quad\mbox{for }f\in
	\mbox{Dom}(L^m),
\end{equation}
where the constant $C^m>0$ can be computed explicitly
and $\varPi^{m}$ is the projection onto $\ker(L^m)$ 
(see Lemma \ref{lem.specLm}). 
Inequality \eqref{coerc.Lm} may be interpreted as a coercivity estimate 
for $L^m$ on $\ker(L^m)^\perp$. It is related to the ``microscopic coercivity'' 
in \cite[Section 1.3]{DMS15} for the mono-species setting.
Hence, we obtain the ``naive'' spectral-gap estimate 
$$
  -(f,L(f))_{L_v^2}\ge C^m\|f-\varPi^{m}(f)\|_\H^2 \quad\mbox{for }f\in\D.
$$
This estimate is not sharp enough for the multi-species case since we need
an inequality for all $f-\varPi^L(f)\in\ker(L)^\perp$ and not only for 
$f-\varPi^m(f)\in\ker(L^m)^\perp
\subset \ker(L)^\perp$. By projecting onto $\ker(L^m)^\perp$ only, we neglect
the ``cross-effects'' coming from the bi-species part of the collision operator.
Thus, we need a better estimate for $-(f,L(f))_{L_v^2}$, which is achieved 
as follows.

{\em Step 2: Absorption of the orthogonal parts.}
The contribution $f^\perp:=f-\varPi^m(f)$ in $-(f,L^b(f))_{L_v^2}$ can be absorbed 
by the $\H$ norm of $f^\perp$ (see Lemma \ref{lem.ortho}), giving for a certain
$\eta>0$,
$$
  -(f,L(f))_{L_v^2} \ge (C^m-4\eta)\|f^\perp\|_\H^2 
	- \frac{\eta}{2}(\varPi^m(f),L^b(\varPi^m(f)))_{L_v^2}.
$$

{\em Step 3: Coercivity of the bi-species operator $L^b$.}
The projection $\varPi^m(f)$ depends on the velocities $u_i$ and energies $e_i$ of
the $i$th species, and thus, the cross terms can be bounded by the differences
of momentum and differences of energies,
\begin{equation}\label{coerc.Lb}
  -(\varPi^m(f),L^b(\varPi^m(f)))_{L_v^2} \ge C\sum_{i,j=1}^n\big(|u_i-u_j|^2
	+ |e_i-e_j|^2\big),
\end{equation}
for some constant $C>0$. This is the key step of the proof.
The inequality may be considered as a coercivity estimate for
the bi-species operator. A key observation is that the differences of
momenta and energies converge to zero as $f$ approaches the global equilibrium.
Whereas \eqref{coerc.Lm} acts on $\ker(L^m)^\perp$, \eqref{coerc.Lb}
gives an estimate on the orthogonal complement $\ker(L^m)$.

{\em Step 4: Lower bound for the differences of momenta and energy.}
The last step consists in estimating the
differences $|u_i-u_j|$ and $|e_i-e_j|$ from below by the error made by
projecting onto $\ker(L^m)^\perp$ instead of $\ker(L)^\perp$:
$$
  \sum_{i,j=1}^n\big(|u_i-u_j|^2 + |e_i-e_j|^2\big)
	\ge C\|f-\varPi^L(f)\|_\H^2 - 2C\|f-\varPi^m(f)\|_\H^2.
$$
Putting together the above inequalities, Theorem \ref{thm.spec}
follows; we refer to section \ref{sec.spec2} for details. 

As a consequence of the spectral-gap estimate, we are able to prove the
exponential decay of the solution $f(t)$ to \eqref{LBE} to the global
equilibrium with an explicit decay rate. 

\begin{theorem}[Convergence to equilibrium]\label{thm.conv}
Let the collision kernels $B_{ij}$ satisfy assumptions (A1)-(A5) and let
$f_I\in H_{x,v}^1$. Then the linearized Boltzmann operator $B=L-T$ generates
a strongly continuous semigroup $\e^{tB}$ on $H^1_{x,v}$, 
which satisfies
\begin{equation}\label{decay1}
  \|\e^{tB}(I-\varPi^B)\|_{H_{x,v}^1} \le C\e^{-\tau t}, \quad t\ge 0,
\end{equation}
for some constants $C$, $\tau>0$. In particular, the solution 
$f(t)=\e^{tB}f_I$ to \eqref{LBE} satisfies
\begin{equation}\label{decay2}
  \|f(t)-f_\infty\|_{H_{x,v}^1} \le C\e^{-\tau t}\|f_I-f_\infty\|_{H_{x,v}^1},
	\quad t\ge 0,
\end{equation}
where $f_\infty:=\varPi^B(f_I)$ is the global equilibrium of
\eqref{LBE}.  Moreover, under the additional assumption (A6) and
lower bound in (A4), the constants $C$ and $\tau$ depend only on
the constants appearing in hypotheses (H1)-(H3) in section
\ref{sec.conv} and in particular on $\lambda$ defined in Theorem
\ref{thm.spec}.
\end{theorem}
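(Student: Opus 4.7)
The plan is to apply the abstract hypocoercivity theorem of Mouhot--Neumann \cite{MoNe06}: once the structural hypotheses (H1)--(H3) for the pair $(L,T)$ are verified and combined with the spectral-gap estimate of Theorem \ref{thm.spec} as microscopic coercivity input, its conclusion is precisely an exponential decay of the form \eqref{decay1} in $H^1_{x,v}$. As a preliminary I would check that $B=L-T$ generates a strongly continuous semigroup on $H^1_{x,v}$: the transport $T$ is skew-adjoint on $L^2_{x,v}$ and commutes with $\pa_{x_k}$, hence leaves $H^1_{x,v}$ invariant; writing $L=K-\Lambda$, the multiplicative part $-\Lambda$ is maximal dissipative, and under (A5) the compact operator $K$ also maps $H^1_{x,v}$ boundedly into itself, since the $v$-derivatives fall only on $\Phi'_{ij}$ and on $M^{1/2}$, both of which are controlled by assumption. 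A Lumer--Phillips or Hille--Yosida argument then yields generation.

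Next I would verify (H1)--(H3). Hypothesis (H1) combines the coercivity $\nu_i\ge\nu_0>0$ from Lemma \ref{lem.nu} with $v$-derivative bounds on the collision operator; the estimate \eqref{H1.2} is exactly where the local integrability of $v\mapsto\Phi'_{ij}(|v|)$ and its boundedness at infinity, supplied by (A5), enter. Hypothesis (H2) is a commutator-type identity for $[L,\pa_v]$ in which the dangerous boundary-type terms cancel thanks to the evenness of $b_{ij}$ on $[-1,1]$ assumed in (A5), via an integration by parts in the $\sigma$-variable. Hypothesis (H3) is the macroscopic coercivity on $\ker(L)$: the transport operator, restricted to the fluid modes and paired with a Poincar\'e inequality on $\T^3$, must produce dissipation in the hydrodynamic directions. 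Assuming these three ingredients are available, the Mouhot--Neumann construction produces a functional $\mathcal{F}(f)$ equivalent to $\|f\|_{H^1_{x,v}}^2$, schematically of the form $\mathcal{F}(f)=\|f\|_{H^1_{x,v}}^2+\eps\sum_k\langle A_k f,f\rangle$ with cross terms $A_k$ built from commutators of $T$ and $\varPi^L$, satisfying $\tfrac{d}{dt}\mathcal{F}(\e^{tB}f)\le-\kappa\,\mathcal{F}(\e^{tB}f)$ on $\ker(B)^\perp$. Estimate \eqref{decay1} follows, and \eqref{decay2} is then obtained by applying the semigroup bound to $f_I-f_\infty\in\ker(B)^\perp$ and using $\e^{tB}f_\infty=f_\infty$.

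The principal obstacle is (H3). In the mono-species setting $\ker(L)$ is five-dimensional and macroscopic coercivity reduces to a Poincar\'e estimate for the linearized compressible Euler system on $\T^3$. Here $\ker(L)$ has dimension $n+4$: each species carries its own density $\rho_i$, but only total momentum $u$ and total temperature $\theta$ are conserved. One must therefore check that the linear first-order system coupling $(\rho_1,\ldots,\rho_n,u,\theta)$ induced by $\varPi^L T\varPi^L$ still satisfies the required coercivity on the torus. The essential input is that the bi-species coercivity \eqref{coerc.Lb} underlying Theorem \ref{thm.spec} forces the species differences $|u_i-u_j|$ and $|e_i-e_j|$ to relax as soon as $f$ leaves the full kernel $\ker(L)$; this extra dissipation controls the $n-1$ additional density modes beyond those present in the mono-species case, so that (H3) closes via the same Poincar\'e mechanism, with a rate ultimately expressible in terms of the constant $\lambda$ from Theorem \ref{thm.spec} and the structural constants in (A1)--(A6).
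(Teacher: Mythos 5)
Your high-level plan (verify the abstract hypotheses of \cite{MoNe06} and invoke its Theorem~1.1) is indeed the paper's strategy, but you have misidentified what hypothesis (H3) says, and this leads you to invent an obstacle that does not exist while skipping the part that actually requires new work.

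In the Mouhot--Neumann framework as stated in the paper's section~\ref{sec.conv}, (H3) is the \emph{microscopic} coercivity, i.e.\ exactly the spectral-gap inequality
$-(f,L(f))_{L_v^2} \ge\lambda\|f-\varPi^L(f)\|_\H^2$, which Theorem~\ref{thm.spec} supplies. It is not a macroscopic coercivity statement about $\varPi^L T\varPi^L$ on the torus, and there is no separate ``linearized Euler-type system in $(\rho_1,\ldots,\rho_n,u,\theta)$'' whose coercivity must be checked as a hypothesis. Your final paragraph, which treats the $(n+4)$-dimensional kernel and the species differences $|u_i-u_j|$, $|e_i-e_j|$ as a new difficulty for (H3), is therefore addressing a non-issue: those cross-effects are already consumed in the proof of Theorem~\ref{thm.spec}, and once that theorem is in hand there is nothing further to do for (H3). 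The Poincar\'e mechanism on $\T^3$ and the control of $\nabla_v\varPi^L(f)$ are \emph{internal} to Mouhot--Neumann's proof and go through verbatim, because the only structural input they need is that $\ker(L)$ is finite-dimensional and consists of Gaussian-times-polynomial functions, which Lemma~\ref{lem.H} provides; the paper explicitly notes that ``there are no cross-effects here.''

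Conversely, you pass over (H2) in one vague sentence, calling it a commutator identity closed by an integration by parts in $\sigma$. That is not how it is verified. The paper decomposes $K=K^{(1)}-K^{(2)}$ into gain-type pieces, uses the evenness of $b_{ij}$ (assumption (A5)) to \emph{symmetrize} $K^{(1)}$ into a finite sum of single-kernel operators $K^{(\ell)}_{ij}$, and then invokes the derivative estimates (5.15)--(5.18) of \cite{MoNe06} for each $K^{(\ell)}_{ij}$ to obtain $\|\nabla_v K^{(\ell)}(f)\|_{L_v^2}^2 \le \eps\|\nabla_v f\|_{L_v^2}^2 + C(\eps)\|f\|_{L_v^2}^2$. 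This is where the genuine multi-species bookkeeping takes place, and your proposal does not engage with it. Likewise, \eqref{H1.2} in (H1) is where the hypothesis on $\Phi'_{ij}$ enters (via $\nabla_v\nu_i\in L^\infty$ from Lemma~\ref{lem.nu}), and you should state this as a short Young-inequality computation rather than lumping it into a slogan about ``$v$-derivative bounds on the collision operator.''

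So the gap is not in the architecture but in the content: you would spend effort trying to prove a macroscopic-coercivity hypothesis that the framework does not require, and you would be missing the actual technical step (H2), which is the nontrivial part. Correcting the role of (H3) and filling in the $K=K^{(1)}-K^{(2)}$ decomposition aligns your argument with the paper's proof.
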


The idea of the proof is to employ the hypocoercivity of the
linearized Boltzmann operator $L-T$, using the interplay between the
degenerate-dissipative properties of $L$ and the conservative properties of
$T$. The aim is to find a functional $G[f]$ which is equivalent to the square
of the norm of a Banach space (here, $H_{x,v}^1$),
$$
  \kappa_1\|f\|_{H_{x,v}^1}^2 \le G[f] \le 
	\kappa_2\|f\|_{H_{x,v}^1}^2 \quad\mbox{for }
	f\in H_{x,v}^1,
$$
leading to
\begin{equation}\label{dGdt}
  \frac{d}{dt}G[f(t)] \le -\kappa\|f(t)\|_{H_{x,v}^1}^2, \quad t>0,
\end{equation}
where $\kappa_1$, $\kappa_2$, $\kappa>0$ and $f(t)=\e^{tB}f_I$. 
These two estimates yield exponential convergence
of $f(t)$ in $H_{x,v}^1$. It turns out that the obvious choice 
$G[f]=c_1\|f\|_{L_{x,v}^2}+c_2\|\na_x f\|_{L_{x,v}^2}+c_3\|\na_v f\|_{L_{x,v}^2}$
does not lead to a closed estimate. The key idea, inspired from \cite{Vil06}
and worked out in \cite{MoNe06}, is to add the ``mixed term'' 
$c_4(\na_x f,\na_v f)_{L_{x,v}^2}$ to the definition of $G[f]$. Then
$$
  \frac{d}{dt}(\na_x f,\na_v f)_{L_{x,v}^2} = -\|\na_x f\|_{L_{x,v}^2}^2
	+ 2(\na_x L(f),\na_v f)_{L_{x,v}^2},
$$
and the last term can be estimated in terms of expressions arising
from the time derivative of the other norms in $G[f(t)]$.  Thus,
choosing $c_i>0$ in a suitable way, one may conclude that \eqref{dGdt}
holds.

In \cite{MoNe06}, the calculation of \eqref{dGdt} is reduced to the
validity of certain abstract conditions on the operators $K$ and
$\Lambda$ (see section \ref{sec.conv}). These conditions state that
$\Lambda$ is coercive in a certain sense, $K$ has a regularizing
effect, and $L=K-\Lambda$ has a local spectral gap.  The last
condition is proved in Theorem \ref{thm.spec}, while the other
conditions follow from direct calculations, since the operators $K$ and
$\Lambda$ are given explicitly. As a consequence, the proof of
  Theorem \ref{thm.conv} essentially consists in verifying the
  abstract conditions stated in \cite{MoNe06}.  In contrast to the
  estimate of Theorem \ref{thm.spec}, where the multi-species
  character plays a role in the spectral-gap estimate, there are no
  ``cross-effects'' here and the same modified functional $G[f]$ as
  above, including the mixed term, can be used. However, the decay
  rate $\tau$ changes, since the constant in hypothesis (H3) (see
  section \ref{sec.conv}) differs in the mono- and multi-species case
  and $\tau$ depends also on that constant.

\blue{
We finish the introduction by commenting possible generalizations.
First, the convergence result based on hypocoercivity requires some regularity on the
initial data, namely $f_I\in H_{x,v}^1$. The extension of the exponential decay
to initial data from $L_{x,v}^2$ might be done by using the method of
\cite{GMM10}, which is based on a high-order factorization argument on the 
resolvents and semigroups. The proof of exponential decay is expected to 
be constructive and to preserve the optimal rate.}

\blue{Second, it seems to be not trivial to extend the results to the whole-space 
case. The problem is that one loses the compactness in the $x$-space. One possibility
is to assume some confinement potential which, under some appropriate weighted
Poincar\'e inequality, can yield compactness of the resolvent and hence a spectral
gap. For instance, Duan \cite{Dua11} used non-constructive techniques to 
prove decay rates for the mono-species linearized Boltzmann equation.
Still in the mono-species case, with one-dimensional collisional invariants and using
constructive methods, the decay is investigated by, e.g., H\'erau and Nier 
\cite{HeNi04} and Villani \cite{Vil06}, working in the space $H^1_{x,v}$, 
and by H\'erau \cite{Her06} and Dolbeault, Mouhot, Schmeiser \cite{DMS15},
working in the space $L_{x,v}^2$. The tasks in the multi-species case
are first to extend the non-constructive methods, which probably does not 
contain new difficulties, and second to
devise a constructive method, which is more involved and work in progress;
see \cite{DHMS15}.}

%

\blue{
Third, a Cauchy theory for the full nonlinear multi-species Boltzmann equation 
in a perturbative regime is work in progress \cite{BrDa15}.}

The paper is organized as follows. In section \ref{sec.prop}, some properties
of the linearized collision operator \eqref{Lij} are collected. Theorem 
\ref{thm.ess} on the essential spectrum of $L$ and $L-T$ and the abstract
spectral-gap estimate in Theorem \ref{thm.spec} are proved in section \ref{sec.spec1}.
We present a second proof of Theorem \ref{thm.spec}
in section \ref{sec.spec2}, by exploiting the
conservation properties and leading to explicit constants. 
Finally, Theorem \ref{thm.conv} is shown in section \ref{sec.conv}.


\section{Properties of the kinetic model}\label{sec.prop}

We show some properties of the linearized collision operator \eqref{Lij}
and the collision frequencies \eqref{nu}. Let assumptions (A1)-(A4) hold.
First we prove an $H$-theorem for \eqref{Lij}.

\begin{lemma}[$H$-theorem for the linearized collision operator]\label{lem.H}
It holds $(f,L(f))_{L_v^2}\le 0$ for all $f\in\D$ and $(f,L(f))_{L_v^2}=0$
if and only if $f\in\ker(L)$, where
\begin{align*}
  \ker(L) &= \big\{f\in L_v^2: \exists \alpha_1,\ldots,\alpha_n,\, e\in\R,\ u\in\R^3,\
	\forall 1\le i\le n, \\
	&\phantom{xxx} f_i = M_i^{1/2}(\alpha_i + u\cdot v+e|v|^2)\big\},
\end{align*}
and $M_i$ is given by \eqref{M}.
\end{lemma}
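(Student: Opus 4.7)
The plan is to follow the classical Boltzmann $H$-theorem strategy, adapted to the multi-species case, exploiting the two microscopic symmetries: the $(v,v^*)$-exchange combined with species relabeling (via (A1)), and the pre/post-collisional exchange $(v,v^*)\leftrightarrow(v',v'^*)$ (via the conservation laws \eqref{cons}).

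First, I would write
$$
(f,L(f))_{L_v^2} = \sum_{i,j=1}^n \int_{\R^3\times\R^3\times\S^2} B_{ij}\,M_i M_j^*\, h_i\,\bigl(h_i' + h_j'^* - h_i - h_j^*\bigr)\,dv\,dv^*\,d\sigma,
$$
with $h_k = M_k^{-1/2} f_k$. Swapping simultaneously $i\leftrightarrow j$ and $v\leftrightarrow v^*$ leaves $B_{ij}$ invariant (by (A1)) and $M_iM_j^*$ invariant, while the bracket $(h_i'+h_j'^*-h_i-h_j^*)$ is preserved and the prefactor $h_i$ is replaced by $h_j^*$. Averaging yields a factor $\tfrac12(h_i+h_j^*)$ in front of the bracket. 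Next, the change of variables $(v,v^*)\leftrightarrow(v',v'^*)$ (which has unit Jacobian and preserves $B_{ij}$ and, via $M_i'M_j'^*=M_iM_j^*$, the Maxwellian prefactor) turns $\tfrac12(h_i+h_j^*)$ into $\tfrac12(h_i'+h_j'^*)$ and negates the bracket. Averaging once more gives the standard negative-definite form
$$
(f,L(f))_{L_v^2} = -\frac14 \sum_{i,j=1}^n \int B_{ij}\,M_i M_j^*\,\bigl(h_i' + h_j'^* - h_i - h_j^*\bigr)^2 dv\,dv^*\,d\sigma \le 0.
$$

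For the equality case, since $B_{ij}>0$ almost everywhere by (A3)--(A4), vanishing of the integral forces $h_i' + h_j'^* = h_i + h_j^*$ for all $1\le i,j\le n$ and a.e.\ $(v,v^*,\sigma)$. Setting $i=j$ recovers the classical mono-species collisional-invariant relation, so that $h_i(v) = a_i + b_i\cdot v + c_i|v|^2$ for constants $a_i\in\R$, $b_i\in\R^3$, $c_i\in\R$. Plugging this ansatz into the cross equation ($i\neq j$) and using \eqref{cons} in the form $v'-v=-(v'^*-v^*)$ and $|v'|^2-|v|^2=-(|v'^*|^2-|v^*|^2)$, the relation reduces to
$$
(b_i-b_j)\cdot(v'-v) + (c_i-c_j)\bigl(|v'|^2 - |v|^2\bigr) = 0.
$$
Since $v'-v$ and $|v'|^2-|v|^2$ can be varied independently by choosing $v,v^*,\sigma$, this forces $b_i=b_j=:u$ and $c_i=c_j=:e$ for every pair $i,j$. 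Hence $h_i=\alpha_i + u\cdot v + e|v|^2$ with $\alpha_i:=a_i$, which is exactly the characterization of $\ker(L)$ in the statement. The converse (any such $f$ lies in $\ker(L)$) is immediate by substitution into the integrand.

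The genuinely multi-species point — and the only place where the argument departs from the mono-species case — is the cross-species step forcing $b_i=b_j$ and $c_i=c_j$; this is the step I would write with the greatest care, since it is precisely the analytic reflection of the physical fact noted in the introduction, namely that only \emph{total} momentum and \emph{total} energy are conserved. Everything else (symmetrization, positivity of $B_{ij}$ from (A3)--(A4), and the classical parameterization of mono-species collisional invariants) is routine and can be imported from the single-species theory.
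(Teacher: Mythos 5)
Your proof is correct and follows essentially the same route as the paper's: the symmetrization leading to the quarter-sum of squares, the reduction via $i=j$ to the classical mono-species collisional invariants, and the use of the cross-species relation together with conservation of momentum and energy to force $u_i=u_j$ and $e_i=e_j$. The only cosmetic difference is that the paper deduces $u_i=u_j$ from a specific class of exchange collisions ($v'=v^*$, $v'^*=v$, $|v|=|v'|$) rather than from the independence of $v'-v$ and $|v'|^2-|v|^2$ that you invoke; your independence claim is valid but should be spelled out in a final write-up (e.g., fix $w=v'-v\neq 0$, vary $v$ to realize any value of $|v'|^2-|v|^2 = 2v\cdot w+|w|^2$, and then choose $v^*$ on the hyperplane $(v-v^*)\cdot w=-|w|^2$ dictated by energy conservation).
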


The proof is similar to the mono-species case except that the 
elements of the null space of $L$ depend on the total mean velocity $u$
and total energy $e$ instead of the individual velocities and energies.
Therefore, we give a complete proof. We note that an $H$-theorem for
the nonlinear Boltzmann operator for a mixture of reactive gases was proved in
\cite{DMS05}.

\begin{proof}
By the change of variables $(v,v^*)\mapsto (v^*,v)$ and $(v,v^*)\mapsto(v',v'^*)$
and the symmetry of $B_{ij}$ (assumption (A1)), we can write for $f\in L_v^2$,
$$
  (f,L(f))_{L_v^2} = -\frac14\sum_{i,j=1}^n\int_{\R^6\times\S^2}B_{ij}M_iM_j^*
	(h_i'+h_j'^*-h_i-h_j^*)^2 dv^*dvd\sigma,
$$
where we recall that $h_i=M_i^{-1/2}f_i$. This shows that $(f,L(f))_{L_v^2}\le 0$
for all $f\in \D$. Moreover, $(f,L(f))_{L_v^2}=0$ if and only if 
\begin{equation}\label{NL.1}
  h_i'+h_j'^*-h_i-h_j^*=0\quad\mbox{for all }(v,v^*)\in\R^3\times\R^3,\ 1\le i,j\le n. 
\end{equation}	
It is shown in \cite[pp.~36-42]{CIP94} that \eqref{NL.1} for $i=j$ implies that
$h_i$ has the form $h_i(v)=\alpha_i + u_i\cdot v + e_i|v|^2$ for suitable
constants $\alpha_i$, $e_i\in\R$ and $u_i\in\R^3$. Inserting this expression into
\eqref{NL.1} leads to
\begin{equation}\label{NL.2}
  u_i\cdot(v'-v) + u_j\cdot(v'^* - v^*) + e_i(|v'|^2 - |v|^2) 
	+ e_j(|v'^*|^2 - |v^*|^2) = 0
\end{equation}
for $1\le i,j\le n$.
We consider the particular type of collisions with $v'=v^*$, $v'^*=v$, and
$|v|=|v'|$. For such collisions, $\sigma=(v^*-v)/|v^*-v|$. Then the above
equation becomes
$$
  (u_i-u_j)\cdot(v'-v) = 0, \quad 1\le i,j\le n.
$$
By rotating the velocities $v$, $v'$ in all possible ways, we deduce that
$(u_i-u_j)\cdot w=0$ for all $w\in\R^3$ and thus, $u_i=u_j$ for all $1\le i,j\le n$.
We set $u:=u_1$. This fact, together with the conservation of momentum 
$v'-v+v'^*-v^*=0$, implies that \eqref{NL.2} becomes
$$
  e_i(|v'|^2-|v|^2) + e_j(|v'^*|^2-|v^*|^2) = 0, \quad 1\le i,j\le n.
$$
Taking into account the conservation of energy $|v'|^2-|v|^2+|v'^*|^2-|v^*|^2=0$,
we infer that $(e_i-e_j)(|v'|^2-|v|^2)=0$ and consequently, $e_i=e_j$ for all
$1\le i,j\le n$. Set $e:=e_1$. We have shown that $(f,L(f))_{L_v^2}=0$ if and only if
there exist $\alpha_1,\ldots,\alpha_n$, $e\in\R$ and $u\in\R^3$ such that
for $1\le i\le n$, $f_i(v)=M_i^{1/2}(\alpha_i+u\cdot v+e|v|^2)$. These functions
clearly belong to $\ker(L)$, which finishes the proof.
\end{proof}

The next result is concerned with the stationary solutions of \eqref{LBE}.

\begin{lemma}\label{lem.glob.eq}
The global equilibrium $f_\infty=(f_{\infty,1},\ldots,f_{\infty,n})$
of \eqref{LBE}, i.e.\ the unique stationary solution, is given by
$$
  f_{\infty,i}(v) = M_i^{1/2}(\alpha_i + u\cdot v + e|v|^2),\quad 1\le i\le n,
$$
where $\alpha_i$, $e\in\R$ and $u\in\R^3$ are uniquely determined by the
global conservation laws of mass, momentum, and energy, i.e.\ by the
equations
$$
  \int_{\R^3}M_i^{1/2}(f_{\infty,i}-f_{I,i})\psi(v)dv = 0, 
	\quad 1\le i\le n, 
$$
for $\psi(v)=1,v_1,v_2,v_3,|v|^2$, where $f_{I,i}$ are the initial data.
\end{lemma}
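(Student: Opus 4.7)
The plan is to show that any stationary solution must lie in $\ker(L)$ pointwise in $x$ (using the $H$-theorem), that the transport equation then forces the coefficients of the elements of $\ker(L)$ to be $x$-independent, and finally that the global conservation laws give a uniquely solvable linear system for the remaining constants.

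Step 1 (reduction to $\ker(L)$ pointwise in $x$). For a stationary $f$, the equation reads $v\cdot\na_x f_i = L_i(f)$. Pairing with $f$ in $L^2_{x,v}$, the transport contribution $\frac12\sum_i\int_{\T^3\times\R^3} v\cdot\na_x(f_i^2)\,dx\,dv$ vanishes by periodicity on $\T^3$, so
\[
  \int_{\T^3}(f(x,\cdot),L(f(x,\cdot)))_{L^2_v}\,dx = (f,L(f))_{L^2_{x,v}} = 0.
\]
Since the integrand is non-positive by Lemma \ref{lem.H}, it vanishes for a.e.\ $x$, and Lemma \ref{lem.H} yields measurable coefficients $\alpha_i(x)$, $u(x)$, $e(x)$ such that $f_i(x,v) = M_i^{1/2}(v)(\alpha_i(x) + u(x)\cdot v + e(x)|v|^2)$.

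Step 2 (the coefficients are constant in $x$). Substituting the ansatz into $v\cdot\na_x f_i = L_i(f) = 0$ and dividing by $M_i^{1/2}>0$ gives the polynomial identity
\[
  v\cdot\na_x\alpha_i(x) + \sum_{k,l}\pa_{x_l}u_k(x)\,v_kv_l + |v|^2\,v\cdot\na_x e(x) = 0 \quad\text{for all } v\in\R^3.
\]
Matching homogeneous degrees in $v$ forces $\na_x\alpha_i\equiv 0$, $\na_x e\equiv 0$, and Killing's equation $\pa_{x_l}u_k + \pa_{x_k}u_l = 0$. On the flat torus the only solutions of Killing's equation are constants: an affine solution $Ax+b$ with $A$ antisymmetric fails to be $\T^3$-periodic unless $A=0$. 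Hence $\alpha_i$, $u$, $e$ are constants and $f_\infty$ has the claimed form.

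Step 3 (the constants are uniquely determined). Since the collisional invariants span $\ker(L)$, the operator $L$ is orthogonal in $L^2_v$ to $M_i^{1/2}$ (for each $i$), to $M_j^{1/2}v_k\mathbf{1}$ ($k=1,2,3$), and to $M_j^{1/2}|v|^2\mathbf{1}$, where $\mathbf{1}=(1,\ldots,1)$. Testing \eqref{LBE} against these invariants and integrating over $\T^3\times\R^3$, the transport terms vanish by periodicity and the collision terms integrate to zero, yielding $n+4$ conservation laws (mass of each species, total momentum, total energy), matching the number of unknowns $(\alpha_1,\ldots,\alpha_n,u_1,u_2,u_3,e)$. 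Plugging the ansatz in and using the standard Gaussian moments $\int M_i\,dv = \rho_{\infty,i}$, $\int M_iv_kv_l\,dv = \rho_{\infty,i}\delta_{kl}$, $\int M_i|v|^4\,dv = 15\rho_{\infty,i}$, the system decouples triangularly: the momentum equations fix $u$ directly (since $\int M_iv\,dv=0$), the mass equations fix each $\alpha_i+3e$, and the energy equation combined with these pins down $e$, hence each $\alpha_i$. The main obstacle is Step 2, namely identifying the quadratic term as Killing's equation and invoking periodicity to eliminate the rotational mode; the rest is moment arithmetic.
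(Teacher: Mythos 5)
Your proof is correct, and it takes a genuinely different route from the paper's in the step that eliminates the $x$-dependence. The paper first notes $\ker(B)=\ker(L)\cap\ker(T)$ via the skew-symmetry of $T$ (essentially your Step 1), and then disposes of the $x$-dependence in one stroke by citing Lemma B.2 of \cite{Bri13}, which asserts $\ker(T)=\{f\in L^2_{x,v}:\na_x f=0\}$ on the torus (a fact whose proof relies on ergodicity of the free-transport flow for irrational velocities). You instead exploit the specific algebraic structure already forced by Step 1: since $f(x,\cdot)\in\ker(L)$ for a.e.\ $x$, it is a Maxwellian times a quadratic polynomial in $v$, so $v\cdot\na_x f_i=0$ becomes a polynomial identity in $v$ whose homogeneous parts yield $\na_x\alpha_i=0$, $\na_x e=0$, and the Killing equation for $u$; periodicity on $\T^3$ then rules out the rotational (antisymmetric affine) mode. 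This is more elementary in the sense that it avoids the general characterization of $\ker(T)$, at the cost of the Killing/rigidity observation. You are also more explicit than the paper in Step 3: the paper just says ``Lemma \ref{lem.H} shows the result,'' whereas you write out the $n+4$ conservation laws (mass per species, total momentum, total energy), note that this matches the number of unknowns $(\alpha_1,\dots,\alpha_n,u,e)$, and solve the resulting triangular system via the Gaussian moment identities \eqref{max1}. One small remark: the lemma's statement lists $5n$ integral conditions, but only $n+4$ of them are genuine conservation laws (momentum and energy are conserved only after summing over species); your accounting in Step 3 is the correct one. Two technical points you wave at but do not spell out — the measurability/regularity of the $x$-dependent coefficients extracted from $f(x,\cdot)$, and the interpretation of the polynomial identity in the distributional sense so that the degree matching can be performed — are standard and cause no trouble, but in a final write-up you would want a sentence on each (the coefficients are moment integrals of $f_i(x,\cdot)$ and hence in $L^2_x$, and a distributional polynomial in $v$ that vanishes has vanishing coefficients).
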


\begin{proof}
First, we claim that $\ker(B)=\ker(L)\cap\ker(T)$, where $B=L-T$
and $T=v\cdot\na_x$ are considered on $\T^3\times\R^3$.
The inclusion $\ker(L)\cap\ker(T)\subset\ker(B)$ being trivial, let $f\in\ker(B)$.
Then, using the skew-symmetry of $T$,
$$
  0 = (f,B(f))_{L_{x,v}^2} = (f,L(f))_{L_{x,v}^2} - (f,T(f))_{L_{x,v}^2} 
	= (f,L(f))_{L_{x,v}^2}.
$$
Lemma \ref{lem.H} shows that $f\in\ker(L)$. But this implies that
$T(f)=L(f)-B(f)=0$ and hence $f\in\ker(T)$. This shows the claim.
Let $f_\infty$ be a stationary solution. Then $f_\infty\in\ker(B)$
and by our claim, $f\in\ker(L)\cap\ker(T)$.
Since $\ker(T)=\{f\in L_{x,v}^2:\na_x f=0\}$ \cite[Lemma B.2]{Bri13}, 
$f_\infty$ does not depend on $x$. Because of $f_\infty\in\ker(L)$,
Lemma \ref{lem.H} shows the result.
\end{proof}

Finally, we prove that the collision frequencies \eqref{nu} are strictly positive
with bounded derivative.

\begin{lemma}\label{lem.nu}
Let Assumptions (A2)-(A4) hold.
The collision frequencies \eqref{nu} satisfy
\begin{equation}\label{Cnu}
  \min_{1\le i\le n}\inf_{v\in\R^3} \nu_i(v) 
	\ge \nu_0 := 2^{3\gamma/2}\frac{C_1\ell^b\rho_\infty}{\sqrt{\pi}}
	\Gamma\left(\frac{\gamma+3}{2}\right) > 0,
\end{equation}
where $C_1>0$ is given by assumption (A3), 
$\ell^b>0$ is defined in \eqref{ellb},
$\rho_\infty:=\sum_{j=1}^n \rho_{j,\infty}$ (see \eqref{M}), 
and $\Gamma$ is the Gamma function. Furthermore,
if additionally (A5) holds, then $\na_v\nu_i\in L_v^\infty(\R^3)$,
implying that $|\nu_i(v)|\le C_\nu(1+|v|)$ for some $C_\nu>0$ and for all
$v\in\R^3$, $i=1,\ldots,n$.
\end{lemma}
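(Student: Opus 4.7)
The proof splits naturally into the lower bound on $\nu_i$ and the derivative bound plus linear growth. All three parts reduce to computations using the factorization (A2).

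\emph{Step 1: Lower bound.} Because the integrand depends on $\sigma$ only through $b_{ij}(\cos\vartheta)$, I would factor
\[
  \nu_i(v) = \sum_{j=1}^n \left(\int_{\S^2} b_{ij}(\cos\vartheta)\,d\sigma\right)\int_{\R^3}\Phi_{ij}(|v-v^*|)\,M_j(v^*)\,dv^*.
\]
Passing to spherical coordinates about the direction $(v-v^*)/|v-v^*|$ gives $\int_{\S^2}b_{ij}\,d\sigma = 2\pi\int_0^\pi b_{ij}(\cos\theta)\sin\theta\,d\theta \ge 2\pi\ell^b$ by definition \eqref{ellb}, and the lower bound in (A3) yields $\Phi_{ij}(|v-v^*|)\ge C_1|v-v^*|^\gamma$. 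It remains to bound $\int_{\R^3}|v-v^*|^\gamma M_j(v^*)\,dv^*$ from below uniformly in $v$. The key trick is the half-space observation: on $\{v^*:v\cdot v^*\le 0\}$ one has $|v-v^*|^2=|v|^2+|v^*|^2-2v\cdot v^*\ge|v^*|^2$, so by the radial symmetry of $M_j$,
\[
  \int_{\R^3}|v-v^*|^\gamma M_j(v^*)\,dv^* \ge \int_{v\cdot v^*\le 0}|v^*|^\gamma M_j(v^*)\,dv^* = \tfrac12\int_{\R^3}|v^*|^\gamma M_j(v^*)\,dv^*.
\]
Summing $\rho_{\infty,j}$ to $\rho_\infty$ and reducing the last integral to $\int_0^\infty r^{\gamma+2}e^{-r^2/2}\,dr = 2^{(\gamma+1)/2}\Gamma((\gamma+3)/2)$ produces a uniform positive lower bound of the form stated in \eqref{Cnu}. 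The main obstacle is the bookkeeping of the $(2\pi)^{3/2}$ normalization of $M_j$ together with the angular factor $2\pi\ell^b$ in order to recover the precise constant $\nu_0=2^{3\gamma/2}C_1\ell^b\rho_\infty\Gamma((\gamma+3)/2)/\sqrt\pi$.

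\emph{Step 2: $L^\infty$ bound on $\nabla_v\nu_i$.} Differentiating under the integral sign (which I would justify by (A5) giving local integrability of $\Phi_{ij}'$),
\[
  \nabla_v\nu_i(v) = \sum_{j=1}^n\int_{\R^3\times\S^2} b_{ij}(\cos\vartheta)\,\Phi_{ij}'(|v-v^*|)\,\frac{v-v^*}{|v-v^*|}\,M_j(v^*)\,d\sigma\,dv^*,
\]
so $|\nabla_v\nu_i(v)|\le 4\pi C_3\sum_j\int_{\R^3}|\Phi_{ij}'(|v-v^*|)|\,M_j(v^*)\,dv^*$ by the upper bound on $b_{ij}$ in (A4). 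I would then split the $v^*$-integral at a radius $R>0$: on $\{|v-v^*|\le R\}$, the local integrability of $w\mapsto\Phi_{ij}'(|w|)$ from (A5) together with $\|M_j\|_{L^\infty}<\infty$ produces a bound independent of $v$; on $\{|v-v^*|>R\}$, the boundedness of $\Phi_{ij}'(|w|)$ at infinity from (A5) combined with $\int M_j\,dv^* = \rho_{\infty,j}$ again gives a bound independent of $v$. Thus $\nabla_v\nu_i\in L_v^\infty(\R^3)$.

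\emph{Step 3: Linear growth.} The pointwise estimate $|\nu_i(v)|\le C_\nu(1+|v|)$ can be obtained either as an immediate consequence of Step 2 via the Lipschitz inequality $|\nu_i(v)|\le\nu_i(0)+\|\nabla_v\nu_i\|_{L^\infty}|v|$, or directly from the upper bound in (A3): using $|v-v^*|\le|v|+|v^*|$ for the linear part and splitting near/far from $v^*=v$ for the singular part $|v-v^*|^{-\delta}$ (integrable near the origin since $\delta\in(0,1)<3$, bounded by $1$ away from it), one obtains $\nu_i(v)\le 4\pi C_3 C_2\sum_j\bigl(|v|\rho_{\infty,j}+\int(|v^*|+|v-v^*|^{-\delta})M_j\,dv^*\bigr)\le C_\nu(1+|v|)$ uniformly in $v$.
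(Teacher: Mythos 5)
Your plan agrees structurally with the paper's proof in all three steps: factor the angular integral, lower-bound the kinetic part by $C_1|v-v^*|^\gamma$ via (A3), split $\Phi_{ij}'$ at a radius $R$ and use the $L^1$/$L^\infty$ dichotomy from (A5) to bound $\nabla_v\nu_i$, and deduce the linear growth from the Lipschitz estimate. Steps 2 and 3 are essentially identical to the paper and correct.

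The one genuine discrepancy is in Step 1. Where you use the half-space observation (restrict to $\{v\cdot v^*\le 0\}$, where $|v-v^*|\ge |v^*|$, then use radial symmetry to pick up a factor $\tfrac12$), the paper symmetrizes the integral via $v^*\mapsto -v^*$ and then applies the concavity (subadditivity) of $r\mapsto r^\gamma$ for $\gamma\in[0,1]$ to get $|v-v^*|^\gamma+|v+v^*|^\gamma\ge 2^\gamma|v^*|^\gamma$, i.e.\ a factor $2^{\gamma-1}$. Both arguments are correct and both produce a uniform positive lower bound, but since $2^{\gamma-1}\ge\tfrac12$ with strict inequality whenever $\gamma>0$, the symmetrization gives a strictly sharper constant. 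Concretely, your route yields a lower bound that is a factor $2^\gamma$ smaller than the paper's, so after the bookkeeping you would obtain $\nu_i(v)\ge 2^{\gamma/2}C_1\ell^b\rho_\infty\Gamma((\gamma+3)/2)/\sqrt{\pi}$ rather than the stated $\nu_0=2^{3\gamma/2}C_1\ell^b\rho_\infty\Gamma((\gamma+3)/2)/\sqrt{\pi}$; the two coincide only for Maxwellian molecules ($\gamma=0$). Your closing claim that the remaining work is merely bookkeeping to "recover the precise constant $\nu_0$" is therefore not quite right: the half-space trick will not recover that precise constant. This does not affect the qualitative conclusion (strict positivity of the $\nu_i$), which is what is actually used downstream, but if the stated numerical value of $\nu_0$ is the target, you need the symmetrization argument rather than the half-space one.
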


\begin{proof}
\blue{The decomposition of $B_{ij}$, according to assumption (A2), implies that
$$
  \nu_i(v) = \sum_{j=1}^n\int_{\R^3} \Phi_{ij}(|v-v^*|)M_j^* dv^*
	\int_{\S^2}b_{ij}(\cos\vartheta)d\sigma.
$$
The integral
$$
  c_{ij} := \int_{\S^2}b_{ij}(\cos\vartheta)d\sigma = 2\pi\int_0^\pi 
	b_{ij}(\cos\vartheta)\sin\vartheta d\vartheta
$$
does not depend on $v$ or $v^*$. We conclude from (A2)-(A4) that
\begin{align}\label{nu.pos}
  \nu_i(v) 
	&= (2\pi)^{-3/2}\sum_{j=1}^nc_{ij}\rho_{\infty,j}
	\int_{\R^3}\Phi_{ij}(|v-v^*|)\e^{-|v^*|^2/2}dv^* \\
	&\ge \frac{C_1\ell^b\rho_\infty}{(2\pi)^{3/2}}\int_{\R^3}|v-v^*|^\gamma 
	\e^{-|v^*|^2/2}dv^*. \nonumber
\end{align}
}
Observe that the function
$$
  G(v) := \int_{\R^3}|v-v^*|^\gamma \e^{-|v^*|^2/2}dv^*
$$
is \blue{uniformly} positive since
the transformation $v^*\mapsto -v^*$ and the elementary inequality
$$
  |v-v^*|^\gamma + |v+v^*|^\gamma \ge |(v-v^*)+(v+v^*)|^\gamma = 2^\gamma|v^*|^\gamma
$$
for $\gamma\in[0,1]$ lead to
$$
  G(v) = \frac12\int_{\R^3}(|v-v^*|^\gamma + |v+v^*|^\gamma)\e^{-|v^*|^2/2}dv^*
	\ge 2^{\gamma-1}\int_{\R^3}|v^*|^\gamma \e^{-|v^*|^2/2}dv^* = 2^{\gamma-1}G(0).
$$
Actually, using spherical coordinates and the change of unknowns $s=r^2/2$,
$$
  G(0) = 4\pi\int_0^\infty r^{\gamma+2} \e^{-r^2/2}dr 
	= 2^{(\gamma+5)/2}\pi\int_0^\infty s^{(\gamma+1)/2}\e^{-s}ds
	= 2^{(\gamma+5)/2}\pi\Gamma\left(\frac{\gamma+3}{2}\right).
$$ 
Inserting the above estimate on $G(v)$ into \eqref{nu.pos} shows \eqref{Cnu}.

It remains to prove that $\na_v\nu_i\in L_v^\infty(\R^3)$. 
To this end, we compute
\begin{align}
  |\na\nu_i(v)| &= (2\pi)^{-3/2}\left|\sum_{j=1}^n c_{ij}\rho_{\infty,j}
	\int_{\R^3}\Phi'_{ij}(|v-v^*|)\cdot\frac{v-v^*}{|v-v^*|}
	\e^{-|v^*|^2/2}dv^*\right| \nonumber \\
	&\le (2\pi)^{-3/2}\sum_{j=1}^n c_{ij}\rho_{\infty,j}
	\int_{\R^3}|\Phi'_{ij}(|v-v^*|)|\e^{-|v^*|^2/2}dv^*. \label{aux3}
\end{align}
For given $R>0$, we decompose 
$$
  |\Phi'_{ij}(|v|)| = |\Phi'_{ij}(|v|)|\chi_{\{|v|<R\}}(v)
	+ |\Phi'_{ij}(|v|)|\chi_{\{|v|\ge R\}}(v).
$$
Assumption (A5) means that there exists $R>0$
such that 
$$
  |\Phi'_{ij}(|\cdot|)|\chi_{\{|\cdot|<R\}}\in L_v^1(\R^3)\quad\mbox{and}\quad
  |\Phi'_{ij}(|\cdot|)|\chi_{\{|\cdot|\ge R\}}\in L_v^\infty(\R^3). 
$$
Thus, the right-hand side of \eqref{aux3} is bounded since it can be written as the sum
of two terms, each of which is the convolution of an $L^1$ and an $L^\infty$ function.
This shows that $\na_v\nu_i\in L_v^\infty(\R^3)$. 
\end{proof}

\begin{remark}\rm
We observe that $\nu_i$ is generally not bounded since the kinetic part
$\Phi_{ij}(r)$ may grow like $r$ as $r\to\infty$. It is possible to show
that $\nu_i$ is bounded if $\Phi_{ij}$ is bounded. The unboundedness of $\nu_i$
implies that the spaces $L_v^2$ and $\H$ are not isomorphic.
\qed
\end{remark}



\section{Geometric properties of the spectrum}\label{sec.spec1}

In this section, we prove Theorem \ref{thm.ess} and the spectral-gap estimate 
\eqref{spec.gap} in Theorem \ref{thm.spec} by using arguments from 
functional analysis.

First, we study the essential spectrum of $L$ and $L-T$.
There exist several definitions of the essential spectrum of a linear operator.
Given a linear, closed and densely defined
operator $A:\mbox{Dom}(A)\subset X\to X$ on a Banach space $X$, we define
$$
  \sigma_{\rm ess}(A) = \{\lambda\in\C:A-\lambda I\mbox{ is not Fredholm}\}.
$$
We recall that a linear, closed, and densely defined operator $A$ is Fredholm 
if its range $\ran(A)$ is closed and both
its kernel and cokernel are finite-dimensional. For other definitions of
the essential spectrum, we refer to \cite{GuWe69}. The essential spectrum is
closed and conserved under compact perturbations, i.e., the bounded operators
$A$ and $B$ have the same essential spectrum if $A-B$ is compact
(Weyl's theorem; see \cite[Theorem S]{GuWe69}.

If $X$ is a Hilbert space and $A$ is selfadjoint, it holds
$\sigma_{\rm ess}(A)\subset\R$ and for given $\lambda\in\R$, we have $\lambda
\in\sigma_{\rm ess}(A)$ if and only if $A-\lambda I$ is not closed or the kernel
of $A-\lambda I$ is infinite dimensional. (This follows from the fact that
$\ran(A-\lambda I)^\perp=\ker(A-\lambda I)$ for closed, selfadjoint operators $A$
\cite[Chap.~V.3.1]{Kat80}.) Moreover, Weyl's criterion holds \cite[Lemma 5.17]{Tes09}:
$\lambda\in\sigma_{\rm ess}(A)$ if and only if $A-\lambda I$ admits a {\em singular
sequence}, i.e.\ a sequence $(f_k)\subset \mbox{Dom}(A)$ such that
(i) $\|f_k\|_X=1$ for all $k\in\N$; (ii) $\|(A-\lambda I)f_k\|_X\to 0$
as $k\to\infty$; and (iii) $(f_k)$ has no convergent subsequences in $X$.

We decompose $L$ as $L=K-\Lambda$, where 
$K=(K_1,\ldots,K_n)$, $\Lambda=(\Lambda_1,\ldots,\Lambda_n)$, and
\begin{align}
  K_i(f) &= \sum_{j=1}^n\int_{\R^3\times\S^2}B_{ij}M_i^{1/2}M_j^*(h_i'+h_j'^*-h_j^*)
	dv^*d\sigma, \nonumber \\
	\Lambda_i(f) &= \nu_i f_i, \quad 1\le i\le n, \label{Gamma}
\end{align}
and the collision frequencies $\nu_i$ are defined in \eqref{nu}. We recall 
from Lemma \ref{lem.nu} that they satisfy $\nu_i(v)\ge \nu_0>0$ and 
$|\nu_i(v)|\le C_\nu(1+|v|)$ for all $i=1,\ldots,n$ and $v\in\R^3$.

\begin{proof}[Proof of Theorem \ref{thm.ess}.]
Since $K$ is compact on $X=L_v^2$ \cite[Prop.~2]{BGPS13}, it follows 
that $\sigma_{\rm ess}(L)=\sigma_{\rm ess}(-\Lambda)=-\sigma_{\rm ess}(\Lambda)$.
Thus, we will first study the essential spectrum of $\Lambda$. 
The proof is divided into several steps.
\blue{Recall that $J=\cup_{i=1}^n\mbox{Im}(\nu_i)\subset[\nu_0,\infty)$.}

{\em Step 1: $\overline{J}\subset\sigma_{\rm ess}(\Lambda)$.} 
Let $\lambda\in J$. 
There exists $j\in\{1,\ldots,n\}$ and $\widehat v\in\R^3$ such that
$\lambda=\nu_j(\widehat v)$. We define the sequence $(f_k)\subset \D$ by
$$
  f_{k,i}(v) = (2\pi\sigma_k)^{-3/4}\exp\left(-\frac{|v-\widehat v|^2}{4\sigma_k}
	\right) \quad\mbox{if }i=j, \quad f_{k,i}(v)=0\quad\mbox{if }i\neq j,
$$
where $\sigma_k=1/k$, $k\in\N$.
Clearly, condition (i) for the singular sequence is satisfied. Furthermore,
\begin{align*}
  \|(\Lambda-\lambda I)f_k\|_{L_v^2}^2
	&= \sum_{i=1}^n\int_{\R^3}(\nu_i(v)-\lambda)^2f_{k,i}(v)^2 dv \\
	&= (2\pi\sigma_k)^{-3/2}\int_{\R^3}(\nu_i(v)-\nu_j(\widehat v))^2
	\exp\left(-\frac{|v-\widehat v|^2}{2\sigma_k}	\right)dv.
\end{align*}
The limit of a sequence of Gaussians with variance tending to zero converges
to the delta distribution $\delta_{\widehat v}$
(in the sense of distributions), which means that
$$
  (2\pi\sigma_k)^{-3/2}\int_{\R^3}u(v)\exp\left(
	-\frac{|v-\widehat v|^2}{2\sigma_k}	\right)dv \to u(\widehat v)
	\quad\mbox{as }k\to\infty
$$
for all functions $u\in C^0(\R^3)$ with polynomial growth at infinity.
Since $|\nu_i(v)|\le C_\nu(1+|v|)$, this condition is satisfied and we conclude
that $\|(\Lambda-\lambda I)f_k\|_{L_v^2}\to 0$ as $k\to\infty$, showing 
that condition (ii) holds.

Let us assume by contradiction that condition (iii) does not hold. 
Then there exists a subsequence $(f_{k_\ell})$ of $(f_k)$ that converges in
$L_v^2$ to some function $f\in L_v^2$. As a consequence, $|f_{k_\ell}|^2\to
|f|^2$ in $L^1_v$ as $\ell\to\infty$. In particular, $f\in L^2_v$. However,
the distributional limit $|f_{k_\ell}|^2\to \delta_{\widehat v}$
and the uniqueness of the limit imply that $\delta_{\widehat v}=|f|^2\in
L_v^1$, which is absurd. Thus, condition (iii) holds, and we infer that
$\lambda\in \sigma_{\rm ess}(\Lambda)$. Then, since $\sigma_{\rm ess}(\Lambda)$
is closed, $\overline{J}\subset\sigma_{\rm ess}(\Lambda)$.

{\em Step 2: $\sigma_{\rm ess}(\Lambda)\subset\overline{J}$.}
Let $´\lambda\in\R\backslash\overline{J}$. Then there exists a constant
$c>0$ such that for all $v\in\R^3$ and $i=1,\ldots,n$, $|\nu_i(v)-\lambda|\ge c$.
If $(f_k)\subset \D$ with $\|f_k\|_{L_v^2}=1$ for all $k\in\N$, we have
$$
  \|(\Lambda-\lambda I)f_k\|_{L_v^2}^2
	= \sum_{i=1}^n\int_{\R^3}(\nu_i(v)-\lambda)^2 f_{k,i}(v)^2 dv
	\ge c^2\sum_{i=1}^n\int_{\R^3}f_{k,i}(v)^2 dv = c^2 > 0
$$
for all $k\in\N$. Thus, condition (ii) cannot hold which implies that
$\lambda\not\in\sigma_{\rm ess}(\Lambda)$.

Steps 1 and 2 imply that $\sigma_{\rm ess}(\Lambda)=\overline{J}$. 

{\em Step 3: $\{\lambda\in\C:\Re(\lambda)\in J\}\subset
\sigma_{\rm ess}(\Lambda+T)$.} Let $\lambda\in \C$ be
such that $\Re(\lambda)\in J$. It follows from Step 1 that $\Re(\lambda)\in
\sigma_{\rm ess}(\Lambda)$. Since $\Lambda$ is selfadjoint on the Hilbert space
$L_v^2$, $\Lambda-\Re(\lambda)I$ is not closed or the kernel of 
$\Lambda-\Re(\lambda)I$ is infinite dimensional. As the operator
$\Lambda-\Re(\lambda)I$ is closed, its kernel must be infinite dimensional.
Therefore, there exists a sequence $(f_k)\subset L_v^2$ such that
$\Lambda(f_k)-\Re(\lambda)f_k=0$ and $(f_k,f_\ell)_{L_v^2}=\delta_{k\ell}$
for $k$, $\ell\in\N$. Let us define $\phi(x,v)=\exp(\I\Im(\lambda)x\cdot v/|v|^2)$
and $g_k=\phi f_k\in L_{x,v}^2$. Since $|\phi|=1$, we have
\begin{equation}\label{gg}
  (g_k,g_\ell)_{L_{x,v}^2} = (f_k,f_\ell)_{L_v^2} = \delta_{k\ell} \quad
	\mbox{for }k,\ell\in\N.
\end{equation}
Furthermore, $\phi\in \mbox{Dom}(T)$ and $T(\phi)=\I\Im(\lambda)\phi$ for $v\neq 0$,
and thus,
$$
  (\Lambda + T - \lambda I)g_k = \phi(\Lambda-\Re(\lambda)I)f_k
	+ f_k(T-\I\Im(\lambda)I)\phi = 0,
$$
which shows that $g_k\in \ker(\Lambda+T-\lambda I)$ for $k\in\N$.
This fact, together with relation \eqref{gg}, implies that $\ker(\Lambda+T-\lambda I)$
is infinite dimensional. As a consequence, $\Lambda+T-\lambda I$ is not
Fredholm and $\lambda\in\sigma_{\rm ess}(\Lambda+T)$, which proves the claim.

{\em Step 4: $\{\lambda\in\C:\Re(\lambda)\not\in\overline{J}\}\subset
\rho(\Lambda+T)$.} Clearly, this gives
$$
  \sigma_{\rm ess}(\Lambda+T)\subset\sigma(\Lambda+T)\subset
	\{\lambda\in\C:\Re(\lambda)\in\overline{J}\}.
$$
Let $\lambda\in\C$ be such that $\Re(\lambda)\in\R\backslash\overline{J}$.
We show first that $\ker(\Lambda+T-\lambda I)=\{0\}$. We assume by contradiction
that there exists $f\in \mbox{Dom}(\Lambda+T)$ satisfying $\|f\|_{L_{x,v}^2}>0$ and
$(\Lambda+T-\lambda I)f=0$. In particular, there is an index
$\ell\in\{1,\ldots,n\}$ such that $\int_{\R^3}\int_{\T^3}f_\ell^2 dxdv>0$.
Then, multiplying $\nu_\ell f_\ell+T(f_\ell)=\lambda f_\ell$ by $\overline{f}_\ell$
(the complex conjugate of $f_\ell$) and integrating in $\T^3\times\R^3$, we obtain
\begin{equation}\label{int1}
  \int_{\R^3}\int_{\T^3}\nu_\ell|f_\ell|^2 dxdv 
	+ \int_{\R^3}\int_{\T^3}\overline{f}_\ell v\cdot\na_x f_\ell dxdv
	= \lambda\int_{\R^3}\int_{\T^3}|f_\ell|^2dxdv.
\end{equation}
By the divergence theorem, the real part of the second integral vanishes,
\begin{align}
  2\Re\int_{\R^3}\int_{\T^3}\overline{f}_\ell v\cdot\na_x f_\ell dxdv
	&= \int_{\R^3}\int_{\T^3}\big(\overline{f}_\ell v\cdot\na_x f_\ell
	+ f_\ell v\cdot\na_x\overline{f}_\ell\big)dxdv \label{antisymm} \\
	&= \int_{\R^3}\int_{\T^3}v\cdot\na_x|f_\ell|^2 dxdv = 0. \nonumber
\end{align}
Then, taking the real part of \eqref{int1}, we infer that
$$
  \Re(\lambda) = \frac{\int_{\R^3}\int_{\T^3}\nu_\ell|f_\ell|^2 dxdv}{
	\int_{\R^3}\int_{\T^3}|f_\ell|^2 dxdv}.
$$
Consequently, $\inf_{\R^3}\nu_\ell \le\Re(\lambda)\le\sup_{\R^3}\nu_\ell$
and, thanks to the continuity of $\nu_\ell$, $\Re(\lambda)\in
\overline{\mbox{Im}(\nu_\ell)}\subset\overline{J}$, which is a contradiction.
Thus, $\ker(\Lambda+T-\lambda I)=\{0\}$. Similarly, we can show that
$\ker((\Lambda+T-\lambda I)^*) = 
\ker(\Lambda+T^*-\overline{\lambda}I)=\{0\}$ as well.

The operator $L-T$ is closed \cite[Theorems 2.2.1 and 2.2.2]{UkYa06}. Thus,
the boundedness of the compact operator $K$ and the stability of
closedness under bounded perturbations \cite[Chap.~III, Problem 5.6]{Kat80}
imply that $\Lambda+T=K-(L-T)$ is closed (and also densely defined).
Hence, $\ran(\Lambda+T-\lambda I)^\perp = \ker((\Lambda+T-\lambda I)^*) = \{0\}$,
meaning that $\Lambda+T-\lambda I$ is invertible. If $f\in L_{x,v}^2$ is given,
there exists $u\in \mbox{Dom}(\Lambda+T)$ such that $(\Lambda+T-\lambda I)u=f$,
which translates into
\begin{equation}\label{nuj}
  (\nu_j-\Re(\lambda))u_j + (T-\I\Im(\lambda))u_j = f, \quad
	j=1,\ldots,n.
\end{equation}
We point out that, since $\nu_j$ is continuous, $\mbox{Im}(\nu_j)$ is an interval
(or a point, in case that $\nu_j$ is constant). This fact and the assumption
$\Re(\lambda)\not\in\overline{J}$ imply that either $\nu_j-\Re(\lambda)>0$
in $\R^3$ or $\nu_j-\Re(\lambda)<0$ in $\R^3$. This means that the sign $s_j$ of
$\nu_j-\Re(\lambda)$ is constant in $\R^3$, for $j=1,\ldots,n$. By multiplying
\eqref{nuj} by $s_j\overline{u}_j$, integrating over $\T^3\times\R^3$,
taking the real part, and summing over $j=1,\ldots,n$, we find that
$$
  \sum_{j=1}^n\int_{\R^3}\int_{\T^3}|\nu_j-\Re(\lambda)||u_j|^2 dxdv
	= \frac12\sum_{j=1}^n s_j\int_{\R^3}\int_{\T^3}\big(\overline{u}_j f_j
	+ u_j\overline{f}_j\big)dxdv.
$$
The (real part of the) second term in \eqref{nuj} vanishes after integration;
see \eqref{antisymm}. \blue{Since $\lambda\in\R\backslash\overline{J}$,
by definition of $J$,} there exists $c_\lambda>0$ such that
$|\nu_j-\Re(\lambda)|\ge c_\lambda$ in $\R^3$ for all $j=1,\ldots,n$. Then
the Cauchy-Schwarz inequality shows that $\|u\|_{L_{x,v}^2}\le c_\lambda^{-1}
\|f\|_{L_{x,v}^2}$. This means that $(\Lambda+T-\lambda I)^{-1}$ is bounded,
so $\lambda\in\rho(\Lambda+T)$. 

Steps 3 and 4 show that $\sigma_{\rm ess}(\Lambda+T)=\{\lambda\in\C:
\Re(\lambda)\in\overline{J}\}$.

{\em Step 5:
  $\sigma_{\rm ess}(-\Lambda-T)=\sigma_{\rm ess}(K-\Lambda-T)$.}  The
operator $K$ is compact on $L_v^2$ but not on $L_{x,v}^2$, so the
claim does not follow from the original form of Weyl's
theorem. Instead we will employ the fact that the essential spectrum
is conserved under a relatively compact perturbation \cite[Section
IV.5.6, Theorem 5.35]{Kat80}. More precisely, we prove that $K$ is
relatively compact with respect to $\Lambda+T$, i.e.,
$B_z:=K(\Lambda+T-zI)^{-1}$ is compact on $L_{x,v}^2$ for some
$z\in\C$ with $\Re(z)\in\R\backslash\overline{J}$. (Notice that by
Step 4, $z\in\rho(\Lambda+T)$.) Then
\begin{equation}\label{comp}
  \sigma_{\rm ess}(K-\Lambda-T) = \sigma_{\rm ess}(-\Lambda-T)
	= \{\lambda\in\C:\Re(\lambda)\in-J\}.
\end{equation}
The second identity is a consequence of Steps 3 and 4. 

To prove the compactness
of $B_z$, we introduce the space $W:=\ell^2(\Z^3;L_v^2)$ of sequences
$f=(f_m)\subset L_v^2$ with the canonical norm $\|f\|_W = (\sum_{m\in\Z^3}
\|f_m\|_{L_v^2}^2)^{1/2}$. Clearly, $W$ is a Hilbert space with the scalar product
$(f,g)_W = \sum_{m\in\Z^3}(f_m,g_m)_{L_v^2}$. Furthermore, we introduce the
Fourier mapping $F:L_{x,v}^2(\T^3\times\R^3)\to W$ by
$$
  F(f) = (\widehat f_m), \quad \widehat f_m(v) = \int_{\T^3} e^{-2\pi\I m\cdot x}
	f(x,v)dx\quad\mbox{for }m\in\Z^3,\, v\in\R^3.
$$
This mapping is bounded, invertible, and has a bounded inverse. We wish to 
show that $\widehat B_z=FB_zF^{-1}:W\to W$ is compact. Then also
$B_z=F^{-1}\widehat B_z F$ is compact as a composition of a compact
and two bounded operators. This idea is due to Ukai; see e.g.\ 
\cite[Section 2.2.1]{UkYa06}. 

Since $K$ and $\Lambda$ do not depend on $x$, it holds that
$\widehat B_z = K(\Lambda+\widehat T-z)^{-1}$, where $\widehat T=2\pi\I v\cdot m$.
Let $(f^{(k)})=(f^{(k)}_m)\subset W$ be a bounded sequence in $W$, i.e., there
exists $c_0>0$ such that for all $k\in\N$,
\begin{equation}\label{c0}
  \|f^{(k)}\|_W^2 = \sum_{m\in\Z^3}\|f^{(k)}_m\|_{L_v^2}^2 \le c_0.
\end{equation}
As $\Re(z)\in \R\backslash\overline{J}$, there is a constant $c_z>0$ such that
for all $i=1,\ldots,n$ and $v\in\R^3$, $|\nu_i(v)+2\pi\I v\cdot m-z|\ge c_z$.
Thus,
\begin{align*}
  \|(\Lambda+\widehat T-z)^{-1}f_m^{(k)}\|_{L_v^2}^2
	&= \sum_{i=1}^n\int_{\R^3}\big|(\nu_i(v)+2\pi\I v\cdot m-z)^{-1}f_{m,i}^{(k)}
	\big|^2 dv \\
	&\le c_z^{-2}\sum_{i=1}^n\int_{\R^3}|f_{m,i}^{(k)}|^2 dv
	= c_z^{-2}\|f_m^{(k)}\|_{L_v^2}^2.
\end{align*}
Summing these inequalities over $m\in\Z^3$, we infer that
$$
  \|(\Lambda+\widehat T-z)^{-1}f^{(k)}\|_W^2
	\le c_z^{-2}\|f^{(k)}\|_W^2 \le c_0c_z^{-2}.
$$
Consequently, the sequence $g^{(k)}:=(\Lambda+\widehat T-z)^{-1}f^{(k)}$ is bounded
in $W$. In particular, for any $s\in\Z^3$, $\|g_s^{(k)}\|_{L_v^2}^2
\le \sum_{m\in\Z^3}\|g_m^{(k)}\|_{L_v^2}^2=\|g^{(k)}\|_W^2 \le c_0c_z^{-2}$.
Hence, for any $s\in\Z^3$, the sequence $(g_s^{(k)})\subset L_v^2$ is bounded
in $L_v^2$. Since $K:L_v^2\to L_v^2$ is compact and $\Z^3$ is countable,
we may apply Cantor's diagonal argument to find a subsequence $(g^{(k_\ell)})$
of $(g^{(k)})$ such that $(K(g_m^{(k_\ell)}))$ is convergent in $L_v^2$
as $\ell\to\infty$, for all $m\in\Z^3$.

We will show that $(\widehat B_z(f^{(k_\ell)}))$ is a Cauchy sequence in $W$.
To this end, let $\ell$, $s$, $N\in\N$. We write
\begin{align}
  \|\widehat B_z(f^{(k_\ell)}) & - \widehat B_z(f^{(k_s)})\|_W^2
	= \sum_{m\in\Z^3}\|K(g_m^{(k_\ell)}) - K(g_m^{(k_s)})\|_{L_v^2}^2 \label{Bz} \\
	&= \sum_{|m|\le N}\|K(g_m^{(k_\ell)}) - K(g_m^{(k_s)})\|_{L_v^2}^2
	+ \sum_{|m|>N}\|K(g_m^{(k_\ell)}) - K(g_m^{(k_s)})\|_{L_v^2}^2, \nonumber
\end{align}
where $|m|=\sum_{i=1}^3|m_i|$ for all $m\in\Z^3$. First, we consider the
second sum on the right-hand side. Denote by $\|\cdot\|_{\mathscr{L}(L_v^2)}$
the norm in the space of linear bounded operators on $L_v^2$. By the definition
of $g_m^{(k)}$, we obtain
\begin{align}
  \sum_{|m|>N} & \|K(g_m^{(k_\ell)}) - K(g_m^{(k_s)})\|_{L_v^2}^2
	= \sum_{|m|>N}\|K(\Lambda+2\pi\I v\cdot m-z)^{-1}(f_m^{(k_\ell)} - f_m^{(k_s)})
	\|_{L_v^2}^2 \label{int2} \\
	&\le 2\sum_{|m|>N}\|K(\Lambda+2\pi\I v\cdot m-z)^{-1}\|_{\mathscr{L}(L_v^2)}^2
	\big(\|f_m^{(k_\ell)}\|_{L_v^2}^2 + \|f_m^{(k_s)}\|_{L_v^2}^2\big). \nonumber
\end{align}
For the operator norm, we employ Prop.~2.2.6 in \cite{UkYa06}, which can be applied
since $\Re(z)\in\R\backslash\overline{J}$:
$$
  \|K(\Lambda+2\pi\I v\cdot m-z)^{-1}\|_{\mathscr{L}(L_v^2)}^2
	\le c_1(1+|m|)^{-\alpha} \quad\mbox{for all }m\in\Z^3
$$
for some suitable constant $c_1>0$ (depending on $z$) and a suitable exponent
$\alpha\in(0,1)$ (actually, $\alpha=4/13$). Let $0<\beta<2\alpha/3$. 
By H\"older's inequality and \eqref{c0}, we estimate
\begin{align*}
  \sum_{|m|>N} & \|K(\Lambda+2\pi\I v\cdot m-z)^{-1}\|_{\mathscr{L}(L_v^2)}^2
	\|f_m^{(k)}\|_{L_v^2}^2
	\le c_0^{\beta/2}c_1\sum_{|m|>N}(1+|m|)^{-\alpha}\|f_m^{(k)}\|_{L_v^2}^{2-\beta} \\
	&\le c_0^{\beta/2}c_1\bigg(\sum_{|m|>N}(1+|m|)^{-2\alpha/\beta}\bigg)^{\beta/2}
	\bigg(\sum_{|m|>N}\|f_m^{(k)}\|_{L_v^2}^2\bigg)^{1-\beta/2} \\
	&\le c_0 c_1\bigg(\sum_{|m|>N}(1+|m|)^{-2\alpha/\beta}\bigg)^{\beta/2}.
\end{align*}
Using this estimate in \eqref{int2}, it follows that
$$
  \sup_{\ell,s\in\N}\sum_{|m|>N}\|K(g_m^{(k_\ell)}) - K(g_m^{(k_s)})\|_{L_v^2}^2
	\le 2c_0c_1\bigg(\sum_{|m|>N}(1+|m|)^{-2\alpha/\beta}\bigg)^{\beta/2}.
$$
The choice of $\beta$ implies that $2\alpha/\beta>3$ and hence, the sum over
$|m|>N$ is finite. In particular, $\sum_{|m|>N}(1+|m|)^{-2\alpha/\beta}\to 0$
as $N\to\infty$. As a consequence, for given $\eps>0$, there exists $N_\eps\in\N$
such that 
$$
  \sup_{\ell,s\in\N}\sum_{|m|>N_\eps}\|K(g_m^{(k_\ell)}) - K(g_m^{(k_s)})\|_{L_v^2}^2
	< \frac{\eps}{2}.
$$
Finally, since $(K(g_m^{(k_\ell)}))$ is convergent in $L_v^2$ for all
$m\in\Z^3$, there is a number $\eta=\eta(\eps)>0$ such that for all $\ell$, $s>\eta$,
$$
  \sum_{|m|\le N_\eps}\|K(g_m^{(k_\ell)}) - K(g_m^{(k_s)})\|_{L_v^2}^2 
	< \frac{\eps}{2}.
$$
Thus, choosing $N=N_\eps$ in \eqref{Bz}, we deduce that $(\widehat B_z(f^{(k_s)}))$
is a Cauchy sequence in the Hilbert space $W$ and consequently, it is convergent.
This shows that $\widehat B_z:W\to W$ is a compact operator and \eqref{comp} holds.
This finishes the proof of Theorem \ref{thm.ess}.
\end{proof}

Next, we show the spectral-gap estimate for the linearized collision operator 
$L=K-\Lambda$, i.e.\ the first statement of Theorem \ref{thm.spec}. 
Since $K$ is compact on $L_v^2$, it remains to prove that 
$\Lambda:\D \subset L_v^2\to L_v^2$ is coercive. 

\begin{lemma}\label{lem.Lambda}
Let (A1)-(A4) hold. Then the embedding $\H\hookrightarrow L_v^2$
is continuous and $\Lambda:\D\to L_v^2$,
defined in \eqref{Gamma}, is a linear unbounded operator with the property
\begin{equation}\label{coerc}
  (f,\Lambda(f))_{L_v^2} = \|f\|_\H^2 \ge C\|f\|_{L_v^2}^2\quad\mbox{for }f\in\H
\end{equation}
for some $C>0$. Moreover, $\Lambda$ can be extended by density to a
linear bounded operator $\Lambda:\H\to\H'$, where $\H'$ is the
  dual of $\H$ with respect to the $L^2_v$ scalar product.  In
particular, the mapping $\H\to\R$,
$f\mapsto\langle \Lambda(f),f\rangle$ is continuous, where
$\langle\cdot,\cdot\rangle$ denotes the duality pairing between $\H'$
and $\H$.
\end{lemma}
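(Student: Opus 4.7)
The plan is to exploit the pointwise positivity and polynomial growth of the collision frequencies $\nu_i$, both of which are already encoded in Lemma \ref{lem.nu}. First I would observe that since $\nu_i(v)\ge \nu_0>0$ for every $v\in\R^3$ and every $i$, one has the pointwise inequality $\nu_0 f_i^2\le \nu_i f_i^2$, so summing and integrating gives $\nu_0\|f\|_{L_v^2}^2\le\|f\|_\H^2$. This immediately shows the continuity of the embedding $\H\hookrightarrow L_v^2$ with constant $\nu_0^{-1/2}$.

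Next, for $f\in\D$, a direct computation yields
\[
  (f,\Lambda(f))_{L_v^2} = \sum_{i=1}^n\int_{\R^3}\nu_i f_i^2\,dv = \|f\|_\H^2,
\]
and combining with the preceding step gives the coercivity estimate \eqref{coerc} with constant $C=\nu_0$. The unboundedness of $\Lambda$ as a map $L_v^2\to L_v^2$ follows from the fact that $\nu_i$ is not bounded (unless $\Phi_{ij}$ is bounded), as noted in the remark after Lemma \ref{lem.nu}; its domain $\mathrm{Dom}(\Lambda)=\D$ is nevertheless dense in $L_v^2$ since it contains, for instance, all smooth compactly supported functions.

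For the extension to a bounded operator $\Lambda:\H\to\H'$, the key observation is that the bilinear form $(f,g)\mapsto\sum_i\int\nu_i f_i g_i\,dv$ factors as $\sum_i\int(\nu_i^{1/2}f_i)(\nu_i^{1/2}g_i)\,dv$. The Cauchy--Schwarz inequality then gives, for $f,g\in\D$,
\[
  |(\Lambda(f),g)_{L_v^2}|\le \|f\|_\H\|g\|_\H,
\]
so $\Lambda$ is bounded on $\D\subset\H$ as a map into $\H'$ with operator norm at most $1$; density of $\D$ in $\H$ (by truncation of $\nu_i$ and smooth cut-off in $v$) yields the desired extension.

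Finally, continuity of $f\mapsto\langle\Lambda(f),f\rangle$ on $\H$ follows from the identity $\langle\Lambda(f),f\rangle=\|f\|_\H^2$ extended by density, together with boundedness of $\Lambda:\H\to\H'$: for $f,g\in\H$,
\[
  |\langle\Lambda(f),f\rangle-\langle\Lambda(g),g\rangle|
  \le |\langle\Lambda(f-g),f\rangle|+|\langle\Lambda(g),f-g\rangle|
  \le (\|f\|_\H+\|g\|_\H)\|f-g\|_\H.
\]
I expect no real obstacle here; the only mildly delicate point is verifying that $\D$ is indeed dense in $\H$, which is handled by a standard truncation-and-mollification argument using that $\nu_i$ is locally bounded and at most of linear growth in $|v|$.
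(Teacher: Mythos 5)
Your proof is correct and takes essentially the same route as the paper: embedding from positivity of $\nu_i$, the identity $(f,\Lambda f)_{L_v^2}=\|f\|_\H^2$, Cauchy--Schwarz for boundedness into $\H'$, and polarization for continuity of the quadratic form. The only minor difference is that you extend $\Lambda$ from $\D$ to $\H$ by density, whereas the paper skips the density step by observing directly that, for any $f\in\H$, the formula $g\mapsto\sum_i\int_{\R^3}\nu_i f_i g_i\,dv$ already defines a bounded linear functional on $\H$ with norm exactly $\|f\|_\H$ (so $\Lambda:\H\to\H'$ is in fact an isometry). Your density check is correct but unnecessary; the direct definition is slightly cleaner and avoids having to verify that $\D$ is dense in the weighted space $\H$.
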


\begin{proof}
The strict positivity of $\nu_i$ in $\R^3$ (see Lemma \ref{lem.nu})
implies that the embedding $\H\hookrightarrow L_v^2$ is continuous.
Then the definitions of $\Lambda_i$ and $\H$ show that for all $f\in\H$,
\eqref{coerc} holds.
For given $f\in\H$, the element $\Lambda(f)=(f_1\nu_1,\ldots,f_n\nu_n)$ 
can be identified with the linear bounded operator $\H\to\R$, 
$g\mapsto\sum_{i=1}^n\int_{\R^3}g_if_i\nu_idv$ and consequently, $\Lambda(f)\in\H'$. 
It is immediate to see that $\|\Lambda(f)\|_{\H'}=\|f\|_\H$, so that 
$\Lambda:\H\to\H'$ is isometric and thus bounded. 
Moreover, it follows that 
$\H\to\R$, $f\mapsto\langle \Lambda(f),f\rangle$, is continuous.
\end{proof}

The following result provides a spectral gap for general operators
which decompose into a compact and a coercive part.

\begin{lemma}\label{lem.spec.gap}
Let $\H_0$ and $\H$ be Hilbert spaces such that $\H\hookrightarrow \H_0$ continuously
and let $L:\H\to\H'$ be a linear bounded operator such that $L=K-\Lambda$ 
with linear bounded operators $\Lambda:\H\to\H'$ and $K:\H_0\to\H_0$.
Furthermore, assume that
\begin{enumerate}[\rm (i)]
\item for all $f\in\H$, $\langle L(f),f\rangle\le 0$ with equality holding
if and only if $f\in\ker(L)$;
\item the operator $K:\H_0\to\H_0$ is compact;
\item there exists $C_0>0$ such that for all $f\in\H$, $\langle\Lambda(f),f\rangle
\ge C_0\|f\|_\H^2$.
\end{enumerate}
Then there exists a constant $C_1>0$ such that 
$$
  -\langle L(f),f\rangle \ge C_1\|f\|_\H^2 \quad\mbox{for all }
	f\in\H\cap\ker(L)^\perp.
$$
\end{lemma}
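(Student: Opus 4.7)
The plan is a standard compactness--contradiction argument based on the decomposition $L=K-\Lambda$, using (ii) for the compact part and (iii) for the coercive part. The strategy is to extract a weak limit from a hypothetical counterexample and show that it must simultaneously lie in $\ker(L)$ and in $\ker(L)^\perp$ while being nonzero.

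Assume by contradiction that no such $C_1$ exists. Then we can find a sequence $(f_k)\subset\H\cap\ker(L)^\perp$ with $\|f_k\|_\H=1$ and $-\langle L(f_k),f_k\rangle\to 0$. Since $(f_k)$ is bounded in the Hilbert space $\H$, a subsequence (not relabeled) satisfies $f_k\rightharpoonup f_\infty$ weakly in $\H$; because $\H\cap\ker(L)^\perp$ is a closed linear subspace of $\H$ (closedness following from the continuous embedding $\H\hookrightarrow\H_0$ and closedness of $\ker(L)^\perp$ in $\H_0$), it is weakly closed, giving $f_\infty\in\H\cap\ker(L)^\perp$. The continuous embedding also preserves weak convergence, so $f_k\rightharpoonup f_\infty$ in $\H_0$; the compactness of $K:\H_0\to\H_0$ then yields $K(f_k)\to K(f_\infty)$ strongly in $\H_0$ along a further subsequence, and weak--strong duality gives $\langle K(f_k),f_k\rangle\to\langle K(f_\infty),f_\infty\rangle$.

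Using the identity $\langle\Lambda(f_k),f_k\rangle=-\langle L(f_k),f_k\rangle+\langle K(f_k),f_k\rangle$ together with hypothesis (iii) and $\|f_k\|_\H=1$, we obtain $C_0\le\langle\Lambda(f_k),f_k\rangle\to\langle K(f_\infty),f_\infty\rangle$, so $f_\infty\neq 0$. To complete the contradiction we show that $f_\infty\in\ker(L)$: by (iii) the symmetric part of $\Lambda$ defines an inner product on $\H$ equivalent to the original one, hence the quadratic form $f\mapsto\langle\Lambda(f),f\rangle$ coincides with the square of an equivalent Hilbert norm on $\H$ and is weakly lower semicontinuous. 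Combined with the already established convergence of $\langle K(f_k),f_k\rangle$, this gives
\begin{equation*}
  0\le -\langle L(f_\infty),f_\infty\rangle \le \liminf_{k\to\infty}\bigl(-\langle L(f_k),f_k\rangle\bigr)=0.
\end{equation*}
Hypothesis (i) then forces $f_\infty\in\ker(L)$, and together with $f_\infty\in\ker(L)^\perp$ we conclude $f_\infty=0$, contradicting the previous step. The most delicate point of this plan is the weak lower semicontinuity of $\langle\Lambda(\cdot),\cdot\rangle$, which is handled cleanly via the symmetrization $\Lambda_s=\frac{1}{2}(\Lambda+\Lambda^*)$; by (iii) it is positive and coercive, so $\langle\Lambda_s(\cdot),\cdot\rangle^{1/2}$ is an equivalent Hilbert norm on $\H$.
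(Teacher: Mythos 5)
Your proof is correct and follows essentially the same compactness--contradiction route as the paper: extract a weak limit, use compactness of $K$ for the strong part, use weak lower semicontinuity of the coercive form $\langle\Lambda(\cdot),\cdot\rangle$ to place the limit in $\ker(L)$, and derive the contradiction via (iii). The only cosmetic differences are the order in which you establish $f_\infty\neq 0$ and $f_\infty=0$, and your justification of weak lower semicontinuity via the symmetrization $\Lambda_s$ and an equivalent Hilbert norm, where the paper instead observes directly that the quadratic form is continuous and convex.
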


\begin{proof}
  We argue by contradiction. Let $(f_n)\subset\H\cap\ker(L)^\perp$ be
  a sequence such that $\|f_n\|_\H=1$ for $n\ge 1$ but
  $\langle L(f_n),f_n\rangle\to 0$ as $n\to\infty$. Since $(f_n)$ is
  bounded in the Hilbert space $\H$, there exists a subsequence, which
  is not relabeled, such that $f_n\rightharpoonup f$ weakly in
  $\H$. Because of the continuous embedding $\H\hookrightarrow\H_0$,
  also $f_n\rightharpoonup f$ weakly in $\H_0$. Since
  $f_n\in\ker(L)^\perp$ and $\ker(L)^\perp$ is weakly closed by
  Mazur's lemma, $f\in\ker(L)^\perp$. As the operator $K:\H_0\to\H_0$
  is compact, by hypothesis (ii), the weak convergence of $(f_n)$ in
  $\H_0$ implies that $K(f_n)\to K(f)$ strongly in $\H_0$. Hence,
  $(f_n,K(f_n))_{\H_0}\to (f,K(f))_{\H_0}$.  Since $\Lambda:\H\to\H'$
  is bounded, the mapping $G:\H\to\R$,
  $f\mapsto\langle\Lambda(f),f\rangle$, is continuous. The linearity
  of $\Lambda$ and property (iii) imply that $G$ is also convex. Thus,
  $G$ is weakly lower semicontinuous \cite[Corollary
  3.9]{Bre10}. Therefore,
$$
  -\langle L(f),f\rangle = \langle\Lambda(f),f\rangle - (K(f),f)_{\H_0}
	\le \liminf_{n\to\infty}\big(\langle \Lambda(f_n),f_n\rangle 
	- (K(f_n),f_n)_{\H_0}\big)
	= 0,
$$
because $\langle L(f_n),f_n\rangle\to 0$ as $n\to\infty$ by assumption. We infer from
hypothesis (i) that $f\in\ker(L)$. But also $f\in\ker(L)^\perp$, so $f=0$. Then,
by hypothesis (iii),
$$
  0 < C_0 = C_0\|f_n\|_\H^2 \le \langle\Lambda(f_n),f_n\rangle
	= (K(f_n),f_n)_{\H_0} - \langle L(f_n),f_n\rangle\to 0,
$$
which is a contradiction.
\end{proof}

Let $\H_0=L_v^2$. By \cite[Prop.~2]{BGPS13}, assumption (ii) of Lemma
\ref{lem.spec.gap} holds. Furthermore, Lemma \ref{lem.Lambda} shows that 
(iii) holds true. Assumption (i) is a consequence of Lemma \ref{lem.H}. 
\blue{Let $f\in\D\subset\H$ und set 
$\widetilde f=f-\varPi^L(f)\in \ker(L)^\perp$. Then
$$
  -\langle L(f),f\rangle = \langle L(\widetilde f),\widetilde f\rangle
	\ge C\|\widetilde f\|_\H^2 = C\|f-\varPi^L(f)\|_\H^2,
$$}%
since $L(f)\in L_v^2$ and $\langle L(f),f\rangle = (f,L(f))_{L_v^2}$ for 
$f\in \D$. This proves the first statement in Theorem \ref{thm.spec}.


\section{Explicit spectral gap estimate}\label{sec.spec2}

We present a second proof of the spectral-gap estimate \eqref{spec.gap}
with explicit constants. The idea is to decompose the collision operator
$L$ into a mono-species and a multi-species part and to exploit the fact
that the conservation properties of $L$ are different from those of the mono-species
part $L^m$. Let assumptions (A1)-(A4) hold.

\subsection{Decomposition}

We decompose $L=L^m+L^b$, where $L^m=(L_1^m,\ldots,L_n^m)$, $L^b=(L_1^b,\ldots,L_n^b)$,
and
\begin{equation}\label{Lmb}
  L_i^m(f_i) = L_{ii}(f_i,f_i), \quad L_i^b(f) = \sum_{j\neq i}L_{ij}(f_i,f_j).
\end{equation}
Denoting by $\varPi^m$ the orthogonal projection onto $\ker(L^m)$ (with respect to the
scalar product in $L_v^2$), we can decompose $f$ according to
\begin{equation}\label{fdecomp}
  f = f^\parallel + f^\perp, \quad\mbox{where }f^\parallel:=\varPi^m(f), \quad
	f^\perp:=f-f^\parallel.
\end{equation}
Lemma \ref{lem.H} shows that
\begin{align}
  f\in \ker(L)\mbox{ if and only if }
	f_i=M_i^{1/2}(\alpha_i+u\cdot v+e|v|^2)\ \mbox{for }\alpha_i,e\in\R,\ 
	u\in\R^3, \label{kerL} \\
	f\in \ker(L^m)\mbox{ if and only if }
		f_i=M_i^{1/2}(\alpha_i+u_i\cdot v+e_i|v|^2)\ \mbox{for }\alpha_i,e_i\in\R,\ 
	u_i\in\R^3, \label{kerLm}
\end{align}
and $f^\parallel$ has clearly the form \eqref{kerLm}.

For later use, we define the following bilinear forms
\begin{align}
  -(f,L^m(f))_{L_v^2} &= \frac14\sum_{i=1}^n\int_{\R^6\times\S^2} B_{ii}
	\Delta_i[h_i]^2 M_iM_i^* dvdv^*d\sigma, \label{Delta} \\
	-(f,L^b(f))_{L_v^2} &= \frac14\sum_{i=1}^n\sum_{j\neq i}\int_{\R^6\times\S^2}
	B_{ij}A_{ij}[h_i,h_j]^2 M_iM_j^*dvdv^*d\sigma, \label{Aij}
\end{align}
where $h_i = M_i^{-1/2}f_i$ and
$$
  \Delta_i[h_i] := h_i'+h_i'^*-h_i-h_i^*, \quad
	A_{ij}[h_i,h_j] := h_i'+h_j'^*-h_i-h_j^*.
$$


\subsection{Spectral-gap estimate for $L^m$}\label{sec.specLm}

Our starting point is the fact that the mono-species collision
operator $L^m$ has an explicitly computable spectral gap. A
spectral-gap estimate for the linearized collision operator with $n=1$
was proved in \cite[Theorem 6.1, Remark 1]{Mou04}:
\begin{equation}\label{spec1}
  \frac14\int_{\R^6\times\S^2}B_{ii}\Delta_i[h_i]^2 M_i M_i^*
	dvdv^*d\sigma \ge 
	\frac{\lambda_m}{\rho_{\infty,i}}\int_{\R^3}(f_i-\varPi^m(f_i))^2\nu_{ii}dv,
\end{equation}
where $\lambda_m=\lambda_m(\gamma,C_1,C^b)>0$, 
only depending on $\gamma$, $C_1$, and $C^b$
(see (A3)-(A4)), can be computed explicitly, 
$$
  \nu_{ii}(v) := \int_{\R^3\times\S^2}B_{ii}(|v-v^*|,\cos\vartheta)
	M_i^* dv^*d\sigma,
$$
and $i\in\{1,\ldots,n\}$ is fixed. This yields the following estimate for $L^m$,
where we recall that the space $\H$ is defined in \eqref{H}.
\begin{lemma}\label{lem.specLm}
With $L^m$ defined in \eqref{Lmb}, we have
$$
  -(f,L^m(f))_{L_v^2} \ge C^m \|f-\varPi^m(f)\|_\H^2 \quad\mbox{for all }
	f\in\mbox{\emph{Dom}}(L^m),
$$
where 
\begin{equation}\label{Cm}
  C^m = \frac{\lambda^m(\gamma,C_1,C^b)}{\beta\rho_{\infty}},
\end{equation}
and $\lambda^m=\lambda^m(\gamma,C_1,C^b)$ is given in \eqref{spec1}.
\end{lemma}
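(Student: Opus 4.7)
The plan is to apply the mono-species spectral-gap estimate \eqref{spec1} species-by-species to the dissipation functional \eqref{Delta}, and then convert the weight $\nu_{ii}$ appearing on the right-hand side into the full multi-species collision frequency $\nu_i$ using hypothesis (A6). The key preliminary observation is that, thanks to \eqref{kerLm}, the null space $\ker(L^m)$ is a direct product over the species index $i$, so the projection $\varPi^m$ acts componentwise: $\varPi^m(f)_i$ depends only on $f_i$ and coincides with the projection of $f_i$ onto the mono-species kernel, which is exactly the $\varPi^m(f_i)$ appearing in \eqref{spec1}.

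With that in hand, I would start from \eqref{Delta} and apply \eqref{spec1} to each summand to obtain
\begin{equation*}
  -(f,L^m(f))_{L_v^2} \;=\; \sum_{i=1}^n \frac{1}{4}\int_{\R^6\times\S^2} B_{ii}\,\Delta_i[h_i]^2\, M_i M_i^*\,dv\,dv^*\,d\sigma \;\ge\; \sum_{i=1}^n \frac{\lambda^m}{\rho_{\infty,i}}\int_{\R^3}(f_i-\varPi^m(f)_i)^2\,\nu_{ii}\,dv.
\end{equation*}
Next I would relate $\nu_{ii}$ and $\nu_i$. Since $M_j^*/M_i^* = \rho_{\infty,j}/\rho_{\infty,i}$ is independent of $v^*$, we have $\sum_{j=1}^n M_j^* = (\rho_\infty/\rho_{\infty,i})\,M_i^*$, and combining this identity with (A6) gives
\begin{equation*}
  \nu_i(v) \;=\; \sum_{j=1}^n \int_{\R^3\times\S^2} B_{ij}\, M_j^*\, dv^*\, d\sigma \;\le\; \beta \int_{\R^3\times\S^2} B_{ii}\sum_{j=1}^n M_j^*\, dv^*\, d\sigma \;=\; \beta\,\frac{\rho_\infty}{\rho_{\infty,i}}\,\nu_{ii}(v).
\end{equation*}
Hence $\nu_{ii}(v) \ge \rho_{\infty,i}/(\beta\rho_\infty)\cdot\nu_i(v)$, and substituting this into the previous display yields
\begin{equation*}
  -(f,L^m(f))_{L_v^2} \;\ge\; \frac{\lambda^m}{\beta\rho_\infty}\sum_{i=1}^n \int_{\R^3}(f_i-\varPi^m(f)_i)^2\,\nu_i\,dv \;=\; C^m\,\|f-\varPi^m(f)\|_\H^2,
\end{equation*}
with $C^m$ as in \eqref{Cm}, by the definition \eqref{H} of the $\H$-norm.

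There is no genuine obstacle here: the entire argument is a clean combination of the known scalar estimate \eqref{spec1} with the pointwise comparison of collision frequencies. The only point that deserves care is the bookkeeping for $\varPi^m$, i.e.\ checking that the componentwise projection used implicitly in \eqref{spec1} agrees with the orthogonal projection onto $\ker(L^m)$ in $L_v^2$; this follows immediately from the product structure of $\ker(L^m)$ identified in \eqref{kerLm}.
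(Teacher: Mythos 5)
Your proof is correct and follows essentially the same route as the paper: sum the mono-species estimate \eqref{spec1} over $i$, then use $M_j^*=(\rho_{\infty,j}/\rho_{\infty,i})M_i^*$ together with (A6) to bound $\nu_i\le \beta(\rho_\infty/\rho_{\infty,i})\nu_{ii}$ and convert the weight. Your explicit check that $\varPi^m$ acts componentwise (so that \eqref{spec1} really applies with $\varPi^m(f)_i$) is a small but welcome clarification that the paper leaves implicit.
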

\begin{proof}
We sum \eqref{spec1} over $i=1,\ldots,n$ and employ \eqref{Delta} to obtain
\begin{equation}\label{fLmf}
  -(f,L^m(f))_{L_v^2} \ge \lambda^m\sum_{i=1}^n\int_{\R^3}
	(f_i-\varPi^m(f))^2\frac{\nu_{ii}}{\rho_{\infty,i}}dv.
\end{equation}
It remains to estimate $\nu_{ii}$ in terms of $\nu_i$, defined in \eqref{nu}.
The definition of $M_i$ implies that $M_j=(\rho_{\infty,j}/\rho_{\infty,i})M_i$. 
This fact, as well as definition \eqref{nu} of $\nu_i$, the lower bound \eqref{Cnu},
and assumption (A6) give
$$
  \nu_i = \sum_{j=1}^n\frac{\rho_{\infty,j}}{\rho_{\infty,i}}
	\int_{\R^3}B_{ij}M_i^* dv^*d\sigma
	\le \beta \sum_{j=1}^n\frac{\rho_{\infty,j}}{\rho_{\infty,i}}
	\int_{\R^3}B_{ii}M_i^* dv^*d\sigma 
	= \frac{\beta\rho_\infty}{\rho_{\infty,i}}\nu_{ii}.
$$
We conclude that $\nu_{ii}/\rho_{\infty,i}\ge\nu_i/(\beta\rho_\infty)$, and inserting
this bound into \eqref{fLmf} yields the result.
\end{proof}

Lemma \eqref{lem.specLm} and the inequality $-(f,L^b(f))_{L_2^v}\ge 0$
immediately show that
$$
  -(f,L(f))_{L_v^2} \ge C^m\|f-\varPi^m(f)\|_\H^2 \quad\mbox{for all }
	f\in\D.
$$
However, we need the projection onto $\ker(L)^\perp$ instead of $\ker(L^m)^\perp$,
which is contained in $\ker(L)^\perp$. Therefore, we will exploit the part
$-(f,L^b(f))_{L_v^2}$ to derive a sharper estimate.


\subsection{Absorption of the orthogonal parts}\label{sec.ortho}

We prove that the contribution $f^\perp$ (introduced in \eqref{fdecomp})
in the term $-(f,L^b(f))_{L_v^2}
= -(f^\parallel+f^\perp,L^b(f^\parallel+f^\perp))_{L_v^2}$ can be absorbed
by the $\H$ norm of $f^\perp$.

\begin{lemma}\label{lem.ortho}
Let $\eta=\min\{1,C^m/8\}$, where $C^m>0$ is given in Lemma \ref{lem.specLm}.
Then, for all $f\in\D$,
$$
  -(f,L(f))_{L_v^2} \ge (C^m-4\eta)\|f-f^\parallel\|_\H^2
	- \frac{\eta}{2}(f^\parallel,L^b(f^\parallel))_{L_v^2},
$$
where $f^\parallel=\varPi^m(f)$ is the projection onto $\ker(L^m)^\perp$.
\end{lemma}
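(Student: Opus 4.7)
The plan is to split $L = L^m + L^b$ and treat the two dissipation contributions separately, exploiting self-adjointness together with the positive semi-definiteness of each piece. For the mono-species part, since $f^\parallel = \varPi^m(f) \in \ker(L^m)$, we have $L^m(f) = L^m(f^\perp)$, and self-adjointness of $L^m$ yields $(f^\parallel, L^m(f^\perp))_{L_v^2} = 0$. Applying Lemma \ref{lem.specLm} to $f^\perp$ (for which $\varPi^m(f^\perp)=0$) then gives
\[
  -(f, L^m(f))_{L_v^2} = -(f^\perp, L^m(f^\perp))_{L_v^2} \ge C^m \|f^\perp\|_\H^2.
\]

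For the bi-species part I would introduce the symmetric bilinear form $D^b[g, h] := -(g, L^b(h))_{L_v^2}$, which is positive semi-definite on $\H$ by the representation \eqref{Aij} and self-adjointness. Bilinearity gives
\[
  D^b[f, f] = D^b[f^\parallel, f^\parallel] + 2 D^b[f^\parallel, f^\perp] + D^b[f^\perp, f^\perp],
\]
and Cauchy-Schwarz for the psd form $D^b$ combined with Young's inequality with parameter $\eps = 1 - \eta/2$ (chosen so that $1-\eps = \eta/2$ matches the target coefficient) produces
\[
  2D^b[f^\parallel, f^\perp] \ge -\eps\, D^b[f^\parallel, f^\parallel] - \eps^{-1} D^b[f^\perp, f^\perp].
\]
Since $\eta \le 1$, one checks $\eps^{-1} - 1 = \eta/(2 - \eta) \le \eta$, hence
\[
  D^b[f, f] \ge \frac{\eta}{2}\, D^b[f^\parallel, f^\parallel] - \eta\, D^b[f^\perp, f^\perp].
\]

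The remaining and most delicate ingredient is the continuity bound $D^b[f^\perp, f^\perp] \le 4 \|f^\perp\|_\H^2$. I would obtain it directly from \eqref{Aij} by expanding $A_{ij}[h_i, h_j]^2 \le 4(h_i'^2 + h_j'^{*2} + h_i^2 + h_j^{*2})$ and treating each of the four contributions via the pre/post-collisional change of variables, using $M_i M_j^* = M_i' M_j'^*$ (a consequence of \eqref{cons}) together with symmetry (A1) to reduce every integral to the form $\int f_i^2 \nu_{ij}\,dv$ with $\nu_{ij}(v) := \int B_{ij} M_j^*\,dv^*\,d\sigma$; summing over $i \ne j$ and invoking $\sum_{j \ne i} \nu_{ij} \le \nu_i$ from \eqref{nu} then yields the claim. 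Combining the three estimates,
\[
  -(f, L(f))_{L_v^2} \ge C^m \|f^\perp\|_\H^2 + \frac{\eta}{2}\, D^b[f^\parallel, f^\parallel] - 4\eta\, \|f^\perp\|_\H^2,
\]
which is exactly the statement after rewriting $D^b[f^\parallel, f^\parallel] = -(f^\parallel, L^b(f^\parallel))_{L_v^2}$. The main obstacle is the continuity estimate for $D^b$: tracking the sharp constant $4$ through the four change-of-variables, so that no weaker constant is incurred which would shrink the admissible range of $\eta$, is the delicate point of the argument.
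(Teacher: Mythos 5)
Your argument is correct and essentially reproduces the paper's proof: both rely on Lemma \ref{lem.specLm} for the mono-species coercivity, the same $\parallel/\perp$ splitting of the bi-species dissipation, and the continuity estimate $-(f^\perp,L^b(f^\perp))_{L_v^2}\le 4\|f^\perp\|_\H^2$ obtained via the pre/post-collisional change of variables, $M_iM_j^*=M_i'M_j'^*$, the symmetry (A1), and the bound $\sum_{j\neq i}\nu_{ij}\le\nu_i$. The only cosmetic difference is in the cross term: the paper first discards a factor $(1-\eta)$ of $-(f,L^b(f))_{L_v^2}$ and then applies the pointwise inequality $(a+b)^2\ge\tfrac12 a^2-b^2$ to $A_{ij}=A_{ij}^\parallel+A_{ij}^\perp$ inside the integral, whereas you keep all of $-(f,L^b(f))_{L_v^2}$ and apply Cauchy--Schwarz/Young to the bilinear form with the $\eta$-dependent parameter $\varepsilon=1-\eta/2$; after weakening $\eta/(2-\eta)\le\eta$ the two routes produce the identical intermediate bound, so the reasoning is the same repackaged.
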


\begin{proof}
By Lemma \ref{lem.specLm}, we find that
\begin{equation}\label{fLf}
  -(f,L(f))_{L_v^2} \ge C^m\|f-f^\parallel\|_\H^2 - (f,L^b(f))_{L_v^2}
	\ge C^m\|f-f^\parallel\|_\H^2 - \eta(f,L^b(f))_{L_v^2},
\end{equation}
since $-(1-\eta)(f,L^b(f))_{L_v^2}\ge 0$ for $\eta\in(0,1]$. We estimate first
the expression $A_{ij}[h_i,h_j]$ in definition \eqref{Aij}, writing
$h_i^\parallel=M_i^{-1/2}f_i^\parallel$ and $h_i^\perp=M_i^{-1/2}f_i^\perp$,
\begin{align*}
  A_{ij}[h_i,h_j]^2 &= \big(A_{ij}[h_i^\parallel,h_j^\parallel]
	+ A_{ij}[h_i^\perp,h_j^\perp]\big)^2 \\
	&= A_{ij}[h_i^\parallel,h_j^\parallel]^2 + A_{ij}[h_i^\perp,h_j^\perp]^2
	+ 2A_{ij}[h_i^\parallel,h_j^\parallel]A_{ij}[h_i^\perp,h_j^\perp] \\
	&\ge \frac12A_{ij}[h_i^\parallel,h_j^\parallel]^2
	- A_{ij}[h_i^\perp,h_j^\perp]^2.
\end{align*}
Inserting this estimate into \eqref{Aij} and \eqref{fLf} gives
\begin{align}
  -(f,L(f))_{L_v^2} &\ge C^m\|f^\perp\|_\H^2
	+ \frac{\eta}{8}\sum_{i=1}^n\sum_{j\neq i}\int_{\R^6\times\S^2}
	B_{ij}A_{ij}[h_i^\parallel,h_j^\parallel]^2M_iM_j^*dvdv^*d\sigma \nonumber \\
	&\phantom{xx}{}- \frac{\eta}{4}\sum_{i=1}^n\sum_{j\neq i}\int_{\R^6\times\S^2}
	B_{ij}A_{ij}[h_i^\perp,h_j^\perp]^2 M_iM_j^*dvdv^*d\sigma. \label{fLf2}
\end{align}

We claim that the last term on the right-hand side can be estimated from below
by $\|f^\perp\|_\H^2$, up to a small factor. For this, we employ the invariance
properties of $B_{ij}$ and the identity $M_iM_j^*=M_i'M_j'^*$:
\begin{align*}
  \int_{\R^6\times\S^2} & B_{ij}A_{ij}[h_i^\perp,h_j^\perp]^2 M_iM_j^*dvdv^*d\sigma \\
	&\le 4\int_{\R^6\times\S^2}B_{ij}\big(((h_i^\perp)')^2 + ((h_j^\perp)'^*)^2
	+ (h_i^\perp)^2 + ((h_j^\perp)^*)^2\big)M_iM_j^* dvdv^*d\sigma \\
	&\le 16\int_{\R^6\times\S^2}B_{ij}(h_i^\perp)^2 M_iM_j^* dvdv^*d\sigma
	= 16\int_{\R^6\times\S^2}B_{ij}(f_i^\perp)^2 M_j^* dvdv^*d\sigma.
\end{align*}
Thus, the last term on the right-hand side of \eqref{fLf2} can be estimated as
\begin{align*}
  - &\frac{\eta}{4} \sum_{i=1}^n\sum_{j\neq i} \int_{\R^6\times\S^2}
	B_{ij}A_{ij}[h_i^\perp,h_j^\perp]^2 M_iM_j^*dvdv^*d\sigma \\
	&\ge -4\eta\sum_{i=1}^n\sum_{j\neq i}\int_{\R^6\times\S^2} B_{ij}
	(f_i^\perp)^2 M_j^*dvdv^*d\sigma 
	\ge -4\eta\sum_{i=1}^n\int_{\R^3}(f_i^\perp)^2\nu_i dv
	= -4\eta\|f^\perp\|_\H^2,
\end{align*}
taking into account definition \eqref{nu} of $\nu_i$. We infer from \eqref{fLf2}
that
$$
  -(f,L(f))_{L_v^2} \ge (C^m-4\eta)\|f-f^\parallel\|_\H^2
	+ \frac{\eta}{8}\sum_{i=1}^n\sum_{j\neq i}\int_{\R^6\times\S^2}
	B_{ij}A_{ij}[h_i^\parallel,h_j^\parallel]^2M_iM_j^*dvdv^*d\sigma,
$$
and definition \eqref{Aij} yields the conclusion.
\end{proof}


\subsection{Estimate for the remaining part}\label{sec.remain}

It remains to estimate the term $-(f^\parallel,L^b(f^\parallel))_{L_v^2}$.

\begin{lemma}\label{lem.Lb}
For $f^\parallel\in\ker(L^m)$,
i.e.\ $f_i^\parallel=M_i^{1/2}(\alpha_i+u_i\cdot v+e_i|v|^2)$ for some
$\alpha_i$, $e_i\in\R$ and $u_i\in\R^3$, we have
$$
  -(f^\parallel,L^b(f^\parallel))_{L_v^2} \ge \frac{D^b}{4}\sum_{i,j=1}^n
	\big(|u_i-u_j|^2+(e_i-e_j)^2\big),
$$
where $D^b>0$ is defined in \eqref{D}.
\end{lemma}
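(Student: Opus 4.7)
The plan is to evaluate $A_{ij}[h_i^\parallel,h_j^\parallel]$ explicitly and thereby reduce the lemma to a coercivity estimate for an explicit quadratic form in the differences $(u_i-u_j,\,e_i-e_j)$. Since $h_i^\parallel=\alpha_i+u_i\cdot v+e_i|v|^2$, the additive constants $\alpha_i,\alpha_j$ cancel in $A_{ij}[h_i^\parallel,h_j^\parallel]=h_i'+h_j'^*-h_i-h_j^*$, and the conservation laws \eqref{cons} give $v'^*-v^*=-(v'-v)$ and $|v'^*|^2-|v^*|^2=-(|v'|^2-|v|^2)$. Collecting terms yields
$$A_{ij}[h_i^\parallel,h_j^\parallel]=(u_i-u_j)\cdot(v'-v)+(e_i-e_j)\bigl(|v'|^2-|v|^2\bigr).$$

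Substituting this into \eqref{Aij} rewrites the left-hand side of the lemma as $\tfrac14\sum_{i\neq j}Q_{ij}(u_i-u_j,e_i-e_j)$, where
$$Q_{ij}(X,Y):=\int_{\R^6\times\S^2}B_{ij}\bigl(X\cdot(v'-v)+Y(|v'|^2-|v|^2)\bigr)^2 M_iM_j^*\,dv\,dv^*\,d\sigma.$$
I would then exploit two symmetries of the integration measure to decouple the two variables. Under the involution $(v,v^*,\sigma)\mapsto(-v,-v^*,-\sigma)$, the factors $B_{ij}$, $M_iM_j^*$ and $|v'|^2-|v|^2$ are preserved while $v'-v$ flips sign, so the cross term integrates to zero. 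A simultaneous rotation $(v,v^*,\sigma)\mapsto(Rv,Rv^*,R\sigma)$ with $R\in SO(3)$ preserves the measure and rotates $v'-v$ by $R$; hence the matrix $\int B_{ij}(v'-v)(v'-v)^T M_iM_j^*$ commutes with every rotation and must be a scalar multiple of the identity. This gives the diagonal form $Q_{ij}(X,Y)=a_{ij}|X|^2+c_{ij}Y^2$ with $a_{ij}=\tfrac13\int B_{ij}|v'-v|^2 M_iM_j^*$ and $c_{ij}=\int B_{ij}(|v'|^2-|v|^2)^2 M_iM_j^*$, and hence $Q_{ij}(X,Y)\ge\min\{a_{ij},c_{ij}\}(|X|^2+Y^2)$.

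It remains to bound $a_{ij}$ and $c_{ij}$ from below by strictly positive constants under (A1)-(A4), so that $D^b$ (defined in \eqref{D} as the minimum of these quantities over $i\neq j$) is positive; summing over pairs and noting that the $i=j$ terms on the right-hand side vanish identically then yields the claim. For $a_{ij}$, the identity $|v'-v|^2=\tfrac12|v-v^*|^2(1-\cos\vartheta)$, combined with $\Phi_{ij}(r)\ge C_1 r^\gamma$ from (A3), the strict positivity of $\int_0^\pi b_{ij}(\cos\vartheta)(1-\cos\vartheta)\sin\vartheta\,d\vartheta$ from (A4), and direct Gaussian integration, gives a clean explicit lower bound. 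The main obstacle will be the analogous estimate for $c_{ij}$: since $|v'|^2-|v|^2$ vanishes on the nontrivial degenerate set $\{(v+v^*)\cdot\sigma=0,\;|v|=|v^*|\}$, no pointwise lower bound is available, so one must restrict the integration to a region where these degeneracies are quantitatively avoided and combine the Gaussian weights with the positivity hypothesis in (A4) to extract an explicit positive constant.
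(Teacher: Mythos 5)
Your proposal follows essentially the same route as the paper: you evaluate $A_{ij}[h_i^\parallel,h_j^\parallel]$ via the microscopic conservation laws to get the difference structure $(u_i-u_j)\cdot(v'-v)+(e_i-e_j)(|v'|^2-|v|^2)$, then kill the cross term by the sign-flip involution $(v,v^*,\sigma)\mapsto -(v,v^*,\sigma)$, and finally extract the factor $\tfrac13|u_i-u_j|^2$ by isotropy. The one cosmetic difference is that you argue via $SO(3)$-invariance of the matrix $\int B_{ij}(v'-v)(v'-v)^{\mathsf T}M_iM_j^*$, whereas the paper does the same thing by hand with componentwise reflections plus the observation that the diagonal entries are equal; your version is a bit slicker but equivalent.

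Two points merit attention. First, your reading of $D^b$ is not quite that of \eqref{D}: the paper's $D^b$ is $\min_{i,j}\int B_{ij}\min\{\tfrac13|v-v'|^2,(|v'|^2-|v|^2)^2\}M_iM_j^*$, i.e.\ the minimum is taken \emph{inside} the integral, whereas you work with $\min\{a_{ij},c_{ij}\}$ (minimum of the two full integrals). Since $\min\{a_{ij},c_{ij}\}\ge\int B_{ij}\min\{\ldots\}M_iM_j^*\ge D^b$, your bound is actually slightly stronger and certainly implies the stated lemma, so this is harmless — but it is not what \eqref{D} says.

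Second, and this is the genuine gap, you do not actually prove $D^b>0$ (equivalently, positivity of $c_{ij}$). You correctly identify that $|v'|^2-|v|^2$ has a nontrivial vanishing set, so no pointwise lower bound on the integrand is possible, and you propose restricting the integration away from the degenerate set to get an explicit constant — but you stop there. The paper sidesteps the need for an explicit estimate with a soft topological argument: the set $X=\{(v,v^*,\sigma):|v'|=|v|\}$ is closed (preimage of $\{0\}$ under a continuous map) and is a proper subset, so $X^c$ is open and nonempty, hence of positive Lebesgue measure; the integrand is strictly positive on $X^c$, so the integral is strictly positive. If you want an explicit constant (which the paper does not provide here), you would indeed need to carry out the quantitative localization you sketch, and you should correct your description of the zero set: the condition $|v'|=|v|$ is the single scalar equation $(v+v^*)\cdot(v'-v)=0$, a codimension-one set, not the codimension-two set $\{(v+v^*)\cdot\sigma=0,\ |v|=|v^*|\}$ you wrote.
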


\begin{proof}
Thanks to the momentum and energy conservation, we obtain differences of the
momenta and energies, which will be crucial in the following:
\begin{align*}
  u_i\cdot v' + u_j\cdot v'^* - u_i\cdot v - u_j\cdot v^* &= (u_i-u_j)\cdot(v'-v), \\
  e_i|v'|^2 + e_j|v'^*|^2 - e_i|v|^2 - e_j|v^*|^2 &= (e_i-e_j)(|v'|^2-|v|^2).
\end{align*}
Using these identities in $A_{ij}[h_i^\parallel,h_j^\parallel]$, where
$h_i^\parallel=\alpha_i+u_i\cdot v+e_i|v|^2$, we find that
\begin{align*}
  -&(f^\parallel,L^b(f^\parallel))_{L_v^2}
	= \frac14\sum_{i=1}^n\sum_{j\neq i}\int_{\R^6\times\S^2}B_{ij}
	A_{ij}[h_i^\parallel,h_j^\parallel]^2 M_iM_j^* dvdv^*d\sigma \\
	&= \frac14\sum_{i=1}^n\sum_{j\neq i}\int_{\R^6\times\S^2}B_{ij}
	\big((u_i-u_j)\cdot(v'-v) + (e_i-e_j)(|v'|^2-|v|^2)\big)^2 M_iM_j^*dvdv^*d\sigma.
\end{align*}
Using the symmetry of $B_{ij}$ (thanks to assumption (A1)) and of $M_iM_j^*$ 
with respect to $v$,
the function $G(v,v^*,\sigma)=B_{ij}(u_i-u_j)\cdot(v'-v)(|v'|^2-|v|^2)$
is odd with respect to $(v,v^*,\sigma)$ and thus,
the mixed term of the square in the above integral vanishes. Therefore, we obtain
\begin{align}
  -(f^\parallel,L^b(f^\parallel))_{L_v^2}
	&= \frac14\sum_{i=1}^n\sum_{j\neq i} \int_{\R^6\times\S^2}B_{ij}
	\big(|(u_i-u_j)\cdot(v'-v)|^2 + (e_i-e_j)^2(|v'|^2-|v|^2)^2\big) \nonumber \\
	&\phantom{xx}{}\times M_iM_j^*dvdv^*d\sigma. \label{fLf3}
\end{align}

Now, we claim that 
$$
  \int_{\R^6\times\S^2}B_{ij}((u_i-u_j)\cdot(v'-v))^2 M_iM_j^*dvdv^*d\sigma
	= \frac{|u_i-u_j|^2}{3}\int_{\R^6\times\S^2}B_{ij}|v-v'|^2M_iM_j^*dvdv^*d\sigma.
$$
To prove this identity, we write $u_{i,k}$ and $v_k$ for the $k$th component of
the vectors $u_i$ and $v$, respectively. The transformation $(v_k,v_k^*,\sigma_k)
\mapsto -(v_k,v_k^*,\sigma_k)$ for fixed $k$ leaves $B_{ij}$, $M_i$, and $M_j^*$
unchanged but $v_k'\mapsto -v_k'$ such that
$$
  \int_{\R^6\times\S^2}B_{ij}v_k'v_\ell M_iM_j^*dvdv^*d\sigma = 0
	\quad\mbox{for }\ell\neq k.
$$
Furthermore,
$$
  \int_{\R^6\times\S^2}B_{ij}v_kv_\ell M_iM_j^*dvdv^*d\sigma = 0
	\quad\mbox{for }\ell\neq k,
$$
since the integrand is odd. Therefore,
\begin{align*}
  \int_{\R^6\times\S^2} & B_{ij}((u_i-u_j)\cdot(v'-v))^2 M_iM_j^*dvdv^*d\sigma \\
	&= \sum_{k,\ell=1}^3 (u_{i,k}-u_{j,k})(u_{i,\ell}-u_{j,\ell})
	\int_{\R^6\times\S^2}B_{ij}(v_k'-v_k)(v_\ell'-v_\ell)M_iM_j^*dvdv^*d\sigma \\
	&= \sum_{k=1}^3 (u_{i,k}-u_{j,k})^2\int_{\R^6\times\S^2}B_{ij}
	(v_k-v_k')^2 M_iM_j^*dvdv^*d\sigma.
\end{align*}
In fact, we can see that the integral is independent of $k$, and we infer that
\begin{align*}
  \int_{\R^6\times\S^2} & B_{ij}((u_i-u_j)\cdot(v'-v))^2 M_iM_j^*dvdv^*d\sigma \\
	&= \frac13\sum_{k=1}^3 (u_{i,k}-u_{j,k})^2\int_{\R^6\times\S^2}B_{ij}
	|v-v'|^2 M_iM_j^*dvdv^*d\sigma,
\end{align*}
from which the claim follows.

Hence, \eqref{fLf3} can be estimated as
$$
  -(f^\parallel,L^b(f^\parallel))_{L_v^2}
	\ge D^b\sum_{i,j=1}^3\big(|u_i-u_j|^2+(e_i-e_j)^2\big),
$$
where
\begin{equation}\label{D}
  D^b = \min_{1\le i,j\le n}\int_{\R^6\times\S^2}B_{ij}
	\min\left\{\frac13|v-v'|^2,(|v'|^2-|v|^2)^2\right\}M_iM_j^* dvdv^*d\sigma.
\end{equation}
It remains to show that $D^b>0$. The integrand of \eqref{D} vanishes
if and only if $|v'|=|v|$. However, the set
$$
  X = \{(v,v^*,\sigma)\in\R^3\times\R^3\times\S^2: |v'|=|v|\}
$$
is closed since it is the pre-image of $\{0\}$ of the continuous function 
$F(v,v^*,\sigma)=|v'|^2-|v|^2$, i.e.\ $X=F^{-1}(\{0\})$, recalling that $v'$
depends on $(v,v^*,\sigma)$ through \eqref{vprime}. Since 
$X\neq\R^3\times\R^3\times\S^2$, its complement $X^c$ is open and nonempty
and thus has positive Lebesgue measure. Since the integrand in \eqref{D} is
positive on $X^c$, we infer that $D^b>0$. This finishes the proof.
\end{proof}


\subsection{Estimate for the momentum and energy differences}\label{sec.diff}

The last step is to derive lower bounds for the differences
$\sum_{i,j}(|u_i-u_j|^2+(e_i-e_j)^2)$. First, we recall some moment
identities:
\begin{equation}\label{max1}
  \int_{\R^3}M_idv = \rho_i, \quad \int_{\R^3}M_i v_jv_k dv = \rho_i\delta_{jk},
	\quad \int_{\R^3}M_i|v|^4 dv = 15\rho_i
\end{equation}
for all $1\le i\le n$ and $1\le j,k\le 3$.

\begin{lemma}\label{lem.max2}
Let $f\in L_v^2$ with $f_i^\parallel=M_i^{1/2}(\alpha_i+u_i\cdot v+e_i|v|^2)$ for
$1\le i\le n$. Then
$$
  \int_{\R^3}M_i^{1/2}f_i dv = \rho_i(\alpha_i+3e_i),\
	\int_{\R^3}M_i^{1/2}f_i vdv = \rho_iu_i,\
	\int_{\R^3}M_i^{1/2}f_i|v|^2 dv = \rho_i(3\alpha_i+15e_i).
$$
\end{lemma}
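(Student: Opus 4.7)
The plan is to reduce the moments of $f_i$ to moments of $f_i^\parallel$ by using the fact that $f_i^\perp = f_i - f_i^\parallel$ is orthogonal in $L_v^2$ to every element of $\ker(L^m)$, and then to evaluate the three integrals against the explicit Gaussian expression for $f_i^\parallel$ via the moment identities \eqref{max1}.

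First, I would observe that by \eqref{kerLm}, for each fixed $i$ the vector whose $i$th component is $M_i^{1/2}\psi(v)$ and whose other components vanish belongs to $\ker(L^m)$ whenever $\psi \in \mathrm{span}\{1,v_1,v_2,v_3,|v|^2\}$. Since $\varPi^m$ is the orthogonal projection onto $\ker(L^m)$ in the $L_v^2$ inner product, testing $f^\perp$ against each such vector yields
$$
\int_{\R^3} M_i^{1/2} f_i^\perp \, dv = 0, \quad \int_{\R^3} M_i^{1/2} f_i^\perp \, v \, dv = 0, \quad \int_{\R^3} M_i^{1/2} f_i^\perp |v|^2 \, dv = 0.
$$
Therefore the three integrals in the lemma are unchanged if $f_i$ is replaced by $f_i^\parallel$, which reduces everything to computing Gaussian moments.

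Next, I would substitute $f_i^\parallel = M_i^{1/2}(\alpha_i + u_i\cdot v + e_i|v|^2)$, so that each integral becomes one of $M_i$ multiplied by $(\alpha_i + u_i\cdot v + e_i|v|^2)\psi(v)$ with $\psi \in \{1,v,|v|^2\}$. Expanding by linearity, the odd moments $\int M_i v \, dv$, $\int M_i (u_i\cdot v)\, dv$, and $\int M_i |v|^2 v\, dv$ vanish by parity. The surviving terms are read off from \eqref{max1}: for $\psi=1$ one gets $\alpha_i\rho_i + e_i\cdot 3\rho_i$; for $\psi=v$ one gets $\int M_i (u_i\cdot v)v\, dv = \rho_i u_i$ from the second identity in \eqref{max1}; and for $\psi=|v|^2$ one gets $\alpha_i\cdot 3\rho_i + e_i\cdot 15\rho_i$ from the second and third identities.

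There is no real obstacle here — the only point requiring care is the first step, namely verifying that test functions with a single nonzero component lie in $\ker(L^m)$ so that the orthogonality reduction is legitimate; this is immediate from \eqref{kerLm}. Once that is in hand, the computation is a direct application of the Gaussian moment identities and parity.
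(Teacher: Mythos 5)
Your proof is correct and follows essentially the same approach as the paper: decompose $f = f^\parallel + f^\perp$, use the orthogonality of $f^\perp$ to $\ker(L^m)$ (via test vectors with a single nonzero component $M_i^{1/2}\psi(v)$, $\psi\in\mathrm{span}\{1,v_1,v_2,v_3,|v|^2\}$) to replace $f_i$ by $f_i^\parallel$ in the moments, and then evaluate using the Gaussian moment identities \eqref{max1} and parity. The paper carries out the computation only for $\psi=1$ and states the other two are similar; your write-up just spells them out.
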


\begin{proof}
Decomposing $f=f^\parallel+f^\perp$, where $f^\parallel=\varPi^m(f)$ and 
$f^\perp=f-\varPi^m(f)$, we infer from $M_i^{1/2}\in\ker(L^m)$ (see \eqref{kerLm})
that $(M_i^{1/2},f_i^\perp)_{L_v^2}=0$ and hence, by \eqref{kerLm} again,
$$
  (M_i^{1/2},f_i^\parallel)_{L_v^2} = \int_{\R^3}M_i(\alpha_i + u_i\cdot v
	+ e_i|v|^2)dv = \rho_i(\alpha_i+3e_i).
$$
The other identities can be shown in a similar way.
\end{proof}

\begin{lemma}\label{lem.diff}
For all $f\in\D$, we have
$$
  \sum_{i,j=1}^n\big(|u_i-u_j|^2 + (e_i-e_j)^2\big)
	\ge \frac{1}{C_k}\big(\|f-\varPi^L(f)\|_\H^2 - 2\|f-\varPi^m(f)\|_\H^2\big),
$$
where $u_i$, $e_i$ are the coefficients of the $i$th component of $\varPi^m(f)$
in \eqref{kerLm}, $\varPi^L$ is the projection on $\ker(L)$, $C_k>0$ is given by
\begin{equation}\label{Ck}
  C_k = 60n\rho_\infty
	\max_{1\le k,\ell\le 5n}\left|\sum_{i=1}^n\int_{\R^3}\psi_k\psi_\ell
	\nu_i dv\right|,
\end{equation}
and $(\psi_k)$ is an arbitrary orthonormal basis of $\ker(L^m)$ in $L_v^2$.
\end{lemma}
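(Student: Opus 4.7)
The proof hinges on the inclusion $\ker(L)\subset\ker(L^m)$, which follows immediately from comparing \eqref{kerL} and \eqref{kerLm}. By the standard projection identity for nested closed subspaces, this yields $\varPi^L(f)=\varPi^L(\varPi^m(f))=\varPi^L(f^\parallel)$. Writing $f-\varPi^L(f)=(f-f^\parallel)+(f^\parallel-\varPi^L(f^\parallel))$, the $\H$-norm triangle inequality combined with $(a+b)^2\le 2a^2+2b^2$ gives
\[
\|f-\varPi^L(f)\|_\H^2-2\|f-\varPi^m(f)\|_\H^2\le 2\|v\|_\H^2,
\]
where $v:=f^\parallel-\varPi^L(f^\parallel)$. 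The task thus reduces to bounding $\|v\|_\H^2$ by the pairwise differences of momenta and energies.

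Since $f^\parallel\in\ker(L^m)$ and $\varPi^L(f^\parallel)\in\ker(L)\subset\ker(L^m)$, one has $v\in\ker(L^m)$. Expanding $v=\sum_{k=1}^{5n}c_k\psi_k$ in the $L_v^2$-orthonormal basis gives $\|v\|_{L_v^2}^2=\sum_k c_k^2$ and $\|v\|_\H^2=\sum_{k,\ell}c_k c_\ell A_{k,\ell}$, where $A_{k,\ell}=\sum_{i=1}^n\int_{\R^3}\psi_{k,i}\psi_{\ell,i}\nu_i\,dv$. A simple row-sum (or Frobenius) estimate on the operator norm of this $5n\times 5n$ symmetric matrix yields
\[
\|v\|_\H^2\le 5n\max_{k,\ell}|A_{k,\ell}|\cdot\|v\|_{L_v^2}^2,
\]
which is the source of the factor $n\max|A_{k,\ell}|$ in $C_k$.

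It remains to control $\|v\|_{L_v^2}^2$. Since $\varPi^L(f^\parallel)$ is the $L_v^2$-best approximation of $f^\parallel$ in $\ker(L)$, for any competitor $g\in\ker(L)$ one has $\|v\|_{L_v^2}^2\le\|f^\parallel-g\|_{L_v^2}^2$. I would pick $g_i=M_i^{1/2}(\alpha_i+u_*\cdot v+e_*|v|^2)$ with $u_*=\frac{1}{n}\sum_j u_j$ and $e_*=\frac{1}{n}\sum_j e_j$, so the $\alpha_i$ contributions cancel exactly. The odd cross terms in $v$ vanish by parity and the moment identities \eqref{max1} give
\[
\|f^\parallel-g\|_{L_v^2}^2=\sum_{i=1}^n\rho_{\infty,i}\bigl(|u_i-u_*|^2+15(e_i-e_*)^2\bigr).
\]
The elementary identity $\sum_{i,j=1}^n|u_i-u_j|^2=2n\sum_{i=1}^n|u_i-u_*|^2$ (and its analogue for energies) converts individual deviations into pairwise differences, yielding $\|v\|_{L_v^2}^2\le\frac{15\rho_\infty}{2n}\sum_{i,j=1}^n(|u_i-u_j|^2+(e_i-e_j)^2)$. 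Chaining the three bounds and absorbing the numerical constants into $C_k$ completes the proof.

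The main technical step is the norm-equivalence on $\ker(L^m)$ in the second paragraph: the weight $\nu_i$ in the $\H$-norm may be unbounded, but finite-dimensionality of $\ker(L^m)$ renders the $\H$- and $L_v^2$-norms equivalent, with the ratio controllable explicitly through the Gram-type matrix $(A_{k,\ell})$. The other ingredients—the nesting of kernels, the triangle inequality, the choice of a specific competitor in $\ker(L)$, and the moment/parity computations—are essentially routine bookkeeping.
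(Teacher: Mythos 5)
Your proof is correct, and the final step follows a genuinely different and somewhat more elementary route than the paper. Both proofs share the opening moves: the nested-projection identity $\varPi^L=\varPi^L\varPi^m$ and the resulting bound $\|f-\varPi^L(f)\|_\H^2-2\|f-\varPi^m(f)\|_\H^2\le 2\|v\|_\H^2$ with $v=\varPi^m(f)-\varPi^L(f)\in\ker(L^m)$, and the Gram-matrix comparison $\|v\|_\H^2\le 5n\max_{k,\ell}|A_{k,\ell}|\,\|v\|_{L_v^2}^2$ on the finite-dimensional space $\ker(L^m)$. Where you diverge is in controlling $\|v\|_{L_v^2}^2$: the paper uses $\varPi^m\varPi^L=\varPi^L$ to obtain the exact Pythagorean identity $\|v\|_{L_v^2}^2=\|\varPi^m(f)\|_{L_v^2}^2-\|\varPi^L(f)\|_{L_v^2}^2$, computes both norms via moment identities and an explicit orthonormal basis of $\ker(L)$, and then passes from a $\rho$-weighted variance to pairwise differences by a Jensen-type inequality. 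You instead invoke the best-approximation characterization of $\varPi^L$ and compare with a hand-chosen competitor $g\in\ker(L)$ built from the unweighted arithmetic means $u_*$, $e_*$; this makes the final conversion to $\sum_{i,j}(|u_i-u_j|^2+(e_i-e_j)^2)$ an exact algebraic identity ($\sum_{i,j}|u_i-u_j|^2=2n\sum_i|u_i-u_*|^2$) rather than an inequality. The price you pay is the crude bound $\rho_{\infty,i}\le\rho_\infty$; the net result is the constant $75\rho_\infty\max_{k,\ell}|A_{k,\ell}|$, which for $n\ge2$ is no larger than the paper's $C_k=60n\rho_\infty\max_{k,\ell}|A_{k,\ell}|$, so the lemma as stated follows a fortiori. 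Your approach buys a shorter computation (no explicit basis of $\ker(L)$, no weighted Jensen) and a constant free of the factor $n$; the paper's approach makes the identity $\|\varPi^m f\|^2-\|\varPi^L f\|^2$ manifest, which is conceptually tied to the fact that it measures exactly the information lost in projecting onto the smaller kernel.
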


\begin{proof}
We again decompose $f=f^\parallel+f^\perp$ with $f^\parallel=\varPi^m(f)$ and
$f^\perp=f-f^\parallel$. Then
\begin{equation}\label{aux1}
  \|f-\varPi^L(f)\|_\H^2 \le 2\big(\|f^\perp\|_\H^2 
	+ \|f^\parallel-\varPi^L(f)\|_\H^2\big).
\end{equation}
We estimate first the difference $g:=f^\parallel-\varPi^L(f)=\varPi^m(f)-\varPi^L(f)
\in\ker(L^m)$ (note that $\ker(L)\subset\ker(L^m)$). 
Let $(\psi_k)$ be an arbitrary orthonormal basis of $\ker(L^m)$ in $L_v^2$. 
Because of  \eqref{kerLm} and $\na_v \nu_i\in L^\infty(\R^3)$, we have
$\psi_k\in\H$. Then, by Young's inequality, we find that
\begin{align*}
  \|g\|_\H^2 &= \sum_{i=1}^n\int_{\R^3}\left|\sum_{k=1}^{5n}(g,\psi_k)_{L_v^2}
	\psi_k\right|^2\nu_i(v)dv
	= \sum_{k,\ell=1}^{5n}(g,\psi_k)_{L_v^2}(g,\psi_\ell)_{L_v^2}
	\sum_{i=1}^n\int_{\R^3}\psi_k\psi_\ell\nu_i(v)dv \\
	&= \sum_{k,\ell=1}^{5n}(g,\psi_k)_{L_v^2}(g,\psi_\ell)_{L_v^2}
	(\psi_k,\psi_\ell)_\H \\
	&\le \frac12\max_{1\le k,\ell\le 5n}|(\psi_k,\psi_\ell)_\H|
	\sum_{k,\ell=1}^{5n}\big((g,\psi_k)_{L_v^2}^2+(g,\psi_\ell)_{L_v^2}^2\big) \\
	&= 5n\max_{1\le k,\ell\le 5n}|(\psi_k,\psi_\ell)_\H|
	\sum_{k=1}^{5n}(g,\psi_k)_{L_v^2}^2
	= 5n\max_{1\le k,\ell\le 5n}|(\psi_k,\psi_\ell)_\H|\,\|g\|_{L_v^2}^2.
\end{align*}
Thus, we infer from \eqref{aux1} that
$$
  \|f-\varPi^L(f)\|_\H^2 \le 2\|f^\perp\|_\H^2 
	+ 10n\max_{1\le k,\ell\le 5n}|(\psi_k,\psi_\ell)_\H|\,
	\|f^\parallel-\varPi^L(f)\|_{L_v^2}^2.
$$
Because of $\ker(L)\subset\ker(L^m)$, we have $\varPi^m\varPi^L=\varPi^L$ and
\begin{align*}
  \|f^\parallel-\varPi^L(f)\|_{L_v^2}^2
	&= \|f^\parallel\|_{L_v^2}^2 - 2(\varPi^m(f),\varPi^L(f))_{L_v^2}
	+ \|\varPi^L(f)\|_{L_v^2}^2 \\
	&= \|f^\parallel\|_{L_v^2}^2 - 2(f,\varPi^L(f))_{L_v^2}
	+ \|\varPi^L(f)\|_{L_v^2}^2 
	= \|f^\parallel\|_{L_v^2}^2 - \|\varPi^L(f)\|_{L_v^2}^2.
\end{align*}
Consequently, setting $k_0=10n\max_{1\le k,\ell\le n}|(\psi_k,\psi_\ell)_\H|$,
\begin{equation}\label{aux2}
  \|f-\varPi^L(f)\|_\H^2 \le 2\|f^\perp\|_\H^2 
	+ k_0\big(\|f^\parallel\|_{L_v^2}^2 - \|\varPi^L(f)\|_{L_v^2}^2\big).
\end{equation}

Next, we compute the $L_v^2$ norms of $f^\parallel$ and $\varPi^L(f)$.
Moment identities \eqref{max1} show that
\begin{align*}
  \|f^\parallel\|_{L_v^2}^2 
	&= \sum_{i=1}^n\int_{\R^3}M_i(\alpha_i + u_i\cdot v + e_i|v|^2)^2 dv \\
	&= \sum_{i=1}^n\int_{\R^3} M_i(\alpha_i^2 + (u_i\cdot v)^2 + e_i^2|v|^4
	+ 2\alpha_i e_i|v|^2)dv \\
	&= \sum_{i=1}^n \rho_{\infty,i}(\alpha_i^2 + |u_i|^2 + 15e_i^2 + 6\alpha_i e_i).
\end{align*}
For the computation of the $L_v^2$ norm of $\varPi^L(f)$, we choose the following 
orthonormal basis $(\phi_j)=(\phi_{j,i})_{i=1,\ldots,n}$ of $\ker(L)$ in $L_v^2$:
$$
  \phi_{j,i} = \rho_{\infty}^{-1/2}M_j^{1/2}\delta_{ij}, \quad
	\phi_{n+k,i} = \rho_{\infty}^{-1/2}M_i^{1/2}v_k, \quad
	\phi_{n+4,i} = (6\rho_\infty)^{-1/2}M_i^{1/2}(|v|^2-3),
$$
where $1\le j\le n$ and $1\le k\le 3$. Then, using the moment identities
of Lemma \ref{lem.max2},
$$
  \|\varPi^L(f)\|_{L_v^2}^2
	= \sum_{j=1}^{n+4}(f,\phi_{j})_{L_v^2}^2 
	= \sum_{i=1}^n\rho_{\infty,i}(\alpha_i+3e_i)^2 
	+ \rho_\infty\left|\sum_{i=1}^n\frac{\rho_{\infty,i}}{\rho_\infty}u_i\right|^2
	+ 6\rho_\infty\left(\sum_{i=1}^n\frac{\rho_{\infty,i}}{\rho_\infty}e_i\right)^2.
$$

Inserting the above identities for $\|f^\parallel\|_{L_v^2}^2$ and
$\|\varPi^L(f)\|_{L_v^2}^2$ into \eqref{aux2}, we conclude that
\begin{align*}
  \|f-\varPi^L(f)\|_\H^2
	&\le 2\|f-\varPi^m(f)\|_\H^2 
	+ k_0\rho_\infty\left(\sum_{i=1}^n\frac{\rho_{\infty,i}}{\rho_\infty}|u_i|^2 
	- \left|\sum_{i=1}^n\frac{\rho_{\infty,i}}{\rho_\infty}
	u_i\right|^2\right) \\
	&\phantom{xx}{}+ 6k_0\rho_\infty\left(\sum_{i=1}^n
	\frac{\rho_{\infty,i}}{\rho_\infty} e_i^2
	- \left(\sum_{i=1}^n\frac{\rho_{\infty,i}}{\rho_\infty}e_i\right)^2\right).
\end{align*}
Then, if the inequalities
\begin{align}
  \sum_{i=1}^n \frac{\rho_{\infty,i}}{\rho_\infty}|u_i|^2 
	- \left|\sum_{i=1}^n\frac{\rho_{\infty,i}}{\rho_\infty}u_i\right|^2
	&\le \sum_{i,j=1}^n|u_i-u_j|^2, \label{aux.ineq1} \\
  \sum_{i=1}^n\frac{\rho_{\infty,i}}{\rho_\infty} e_i^2 
	- \left(\sum_{i=1}^n\frac{\rho_{\infty,i}}{\rho_\infty}e_i\right)^2
	&\le \sum_{i,j=1}^n(e_i-e_j)^2 \label{aux.ineq2}
\end{align}
hold, the lemma follows with $C_k=6k_0\rho_\infty$.

It remains to prove \eqref{aux.ineq1} and \eqref{aux.ineq2}. To this end, we define
the following scalar product on $\R^{3n}$:
$$
  (u,v)_\rho = \sum_{i=1}^n\frac{\rho_{\infty,i}}{\rho_\infty}u_i\cdot v_i, \quad
	u=(u_1,\ldots,u_n),\,v=(v_1,\ldots,v_n)\in\R^{3n},
$$
where $u_i\cdot v_i$ denotes the usual scalar product in $\R^3$. The
corresponding norm is $\|u\|_\rho = (u,u)_\rho^{1/2}$. Then
${\mathbf 1}=(1,\ldots,1)\in\R^{3n}$ satisfies $\|{\mathbf 1}\|_\rho=1$.
The elementary identity
$$
  \|u\|_\rho^2 - (u,{\mathbf 1})_\rho^2 = \|u-(u,{\mathbf 1})_\rho{\mathbf 1}\|_\rho^2
$$
can be equivalently written as
$$
  I := \sum_{i=1}^n \frac{\rho_{\infty,i}}{\rho_\infty}|u_i|^2 
	- \left|\sum_{i=1}^n\frac{\rho_{\infty,i}}{\rho_\infty}u_i\right|^2
	= \sum_{i=1}^n\frac{\rho_{\infty,i}}{\rho_\infty}\left|u_i 
	- \sum_{j=1}^n\frac{\rho_{\infty,j}}{\rho_\infty}u_j
	\right|^2.
$$
Then, using $\sum_{j=1}^n\rho_{\infty,j}=\rho_\infty$,
\begin{align*}
  I &= \sum_{i=1}^n\frac{\rho_{\infty,i}}{\rho_\infty}
	\left|\left(1-\frac{\rho_{\infty,i}}{\rho_\infty}\right)u_i
	- \sum_{j\neq i}\frac{\rho_{\infty,j}}{\rho_\infty}u_j\right|^2
	= \sum_{i=1}^n\frac{\rho_{\infty,i}}{\rho_\infty}
	\left|\sum_{j\neq i}\frac{\rho_{\infty,j}}{\rho_\infty}
	(u_i-u_j)\right|^2 \\
	&= \sum_{i=1}^n\frac{\rho_{\infty,i}}{\rho_\infty}
	\left(\sum_{k\neq i}\frac{\rho_{\infty,k}}{\rho_\infty}\right)^2
  \left|\frac{\sum_{j\neq i}(\rho_{\infty,j}/\rho_\infty)
	(u_i-u_j)}{\sum_{k\neq i}\rho_{\infty,k}/\rho_\infty}  \right|^2 \\
  &= \sum_{i=1}^n\frac{\rho_{\infty,i}}{\rho_\infty}
	\left(\sum_{k\neq i}\frac{\rho_{\infty,k}}{\rho_\infty}\right)^2
	\left|\sum_{j\neq i}\lambda_j(u_i-u_j)\right|^2,
\end{align*}
where $\lambda_j=(\rho_{\infty,j}/\rho_\infty)(\sum_{k\neq i}
(\rho_{\infty,k}/\rho_\infty))^{-1}$. 
Since $\sum_{j\neq i}\lambda_j=1$, we may apply Jensen's inequality to this convex
combination, leading to
\begin{align*}
  I &\le \sum_{i=1}^n\frac{\rho_{\infty,i}}{\rho_\infty}
	\left(\sum_{k\neq i}\frac{\rho_{\infty,k}}{\rho_\infty}\right)^2
	\sum_{j\neq i}\lambda_j|u_i-u_j|^2 \\
  &= \sum_{i=1}^n\frac{\rho_{\infty,i}}{\rho_\infty}
	\left(1-\frac{\rho_{\infty,i}}{\rho_\infty}\right)
	\sum_{j\neq i}\frac{\rho_{\infty,j}}{\rho_\infty}|u_i-u_j|^2
	\le \sum_{i,j=1}^n|u_i-u_j|^2,
\end{align*}
since $\rho_{\infty,j}\le\rho_\infty$. This ends the proof.
\end{proof}

Now, we are able to prove Theorem \ref{thm.spec}.

\begin{proof}[Proof of Theorem \ref{thm.spec}]
By Lemmas \ref{lem.ortho}, \ref{lem.Lb}, and \ref{lem.diff}, we obtain
\begin{align*}
  -(f,L(f))_{L_v^2} 
	&\ge (C^m-4\eta)\|f-f^\parallel\|_\H^2
	+ \frac{\eta D^b}{8}\sum_{i,j=1}^n\big(|u_i-u_j|^2+(e_i-e_j)^2\big) \\
	&\ge \left(C^m-4\eta-\frac{\eta D^b}{4C_k}\right)\|f-f^\parallel\|_\H^2 
	+ \frac{\eta D^b}{8C_k}\|f-\varPi^L(f)\|_\H^2.
\end{align*}
The first term on the right-hand side is nonnegative if we choose
$\eta = \min\{1,4C^mC_k/(16C_k+D^b)\}$, and 
estimate \eqref{spec.gap} follows with $\lambda=\eta D^b/(8C_k)$.
\end{proof}


\section{Convergence to equilbrium}\label{sec.conv}

In this section, we prove Theorem \ref{thm.conv}. The idea of the proof
is to adapt the hypocoercivity method of \cite{MoNe06} to the multi-species
setting. To this end, we need to verify the structural assumptions
(H1)-(H3) in \cite[Theorem 1.1]{MoNe06}. The setting is as follows.

Let $L$ be a closed, densely defined, and self-adjoint operator on 
$\mbox{Dom}(L)\subset L_v^2$ such that
$L=K-\Lambda$ and the operators $K$ and $\Lambda$ satisfy the 
following assumptions:

\renewcommand{\labelenumi}{(H\theenumi)}
\begin{enumerate}
\item The operator $\Lambda$ is coercive in the following sense:
There exist a norm $\|\cdot\|_\H$ on $\H\subset L_v^2$
and positive constants $\bar\nu_i$ ($0\le i\le 4$) such that for all
$f\in\mbox{Dom}(L)\subset\H$,
\begin{align}
  & \bar\nu_0\|f\|_{L_v^2}^2 \le \bar\nu_1\|f\|_\H^2 
	\le (f,\Lambda(f))_{L_v^2}\le \bar\nu_2\|f\|_\H^2, \label{H1.1} \\
	& (\na_v f,\na_v\Lambda(f))_{L_v^2} \ge \bar\nu_3\|\na_v f\|_\H^2 
	- \bar\nu_4\|f\|_{L_v^2}^2. \label{H1.2}
\end{align}
Moreover, there exists a constant $C_L>0$ such that for all $f$, $g\in\mbox{Dom}(L)$,
\begin{equation}\label{H1.3}
  (L(f),g)_{L_v^2} \le C_L\|f\|_\H\|g\|_\H.
\end{equation}
\item The operator $K$ has a regularizing effect in the following sense:
For all $\eps>0$, there exists $C(\eps)>0$ such that for all
$f\in H_v^1$,
$$
  (\na_v f,\na_v K(f))_{L_v^2} \le \eps\|\na_v f\|_{L_v^2}^2
	+ C(\eps)\|f\|_{L_v^2}^2.
$$
\item The operator $L$ has a finite-dimensional
kernel and the following local spectral-gap assumption holds: There exists
$\lambda>0$ such that for all $f\in\mbox{Dom}(L)$,
$$
  -(f,L(f))_{L_v^2} \ge\lambda\|f-\varPi^L(f)\|_\H^2,
$$
where $\varPi^L$ is the projection on $\ker(L)$.
\end{enumerate}

Assumption (H3) is a consequence of Theorem \ref{thm.spec}.
Next, we verify assumption (H1). Using Lemma \ref{lem.Lambda} and the 
continuous embedding $\H\hookrightarrow L_v^2$, we see that \eqref{H1.1} holds.
For the proof of \eqref{H1.2}, we employ Young's inequality:
\begin{align*}  
  (\na_v f,\na_v\Lambda(f))_{L_v^2}
	&= \sum_{i=1}^n\int_{\R^3}\na_v f_i\cdot\na_v(\nu_i f_i)dv \\
	&= \sum_{i=1}^n\left(\int_{\R^3}f_i\na_v f_i\cdot\na_v\nu_i dv
	+ \int_{\R^3}|\na_v f_i|^2 \nu_i dv\right) \\
	&\ge \frac12\sum_{i=1}^n\left(-\int_{\R^3}\frac{|\na_v\nu_i|^2}{\nu_i}f_i^2 dv
	+ \int_{\R^3}|\na_v f_i|^2\nu_i dv\right) \\
	&\ge \bar\nu_3\|\na_v f\|_\H^2 - \bar\nu_4\|f\|_{L_v^2}^2,
\end{align*}
where $\bar\nu_3=1/2$ and $\bar\nu_4=\max_{1\le i\le n}\sup_{v\in\R^3}
|\na_v\nu_i|^2/(2\nu_i)$. Note that $\bar\nu_4$ is finite since $\na_v\nu_i$
is bounded and $\nu_i$ is strictly positive (see Lemma \ref{lem.nu}).
Finally, inequality \eqref{H1.3} follows from the decomposition $L=K-\Lambda$,
the compactness and hence continuity of $K$, the explicit expression for $\Lambda$,
and the Cauchy-Schwarz inequality applied to $(L(f),g)_{L_v^2}$.

It remains to verify assumption (H2). Let $N:=\rho_{\infty,i}^{-1/2}M_i^{1/2}
= (2\pi)^{-3/4}\exp(-|v|^2/4)$. We decompose $K=K^{(1)}-K^{(2)}$, where
$K^{(j)}=(K^{(j)}_1,\ldots,K^{(j)}_n)$ and
\begin{align*}
  K_i^{(1)} &= \sum_{j=1}^n\int_{\R^3\times\S^2}B_{ij}M_i^{1/2}M_j^*
	\left(\frac{f_i'}{(M_i')^{1/2}} + \frac{f_j'^*}{(M_j'^*)^{1/2}}\right)dv^*d\sigma, \\
	K_i^{(2)} &= \sum_{j=1}^n\int_{\R^3\times\S^2}B_{ij}(M_iM_j^*)^{1/2}f_j^* dv^*d\sigma
\end{align*}
for $1\le i\le n$.
Because of $M_k'M_k'^*=M_kM_k^*$ for all $k$, we find that
\begin{align*}
  K_i^{(1)}(f) &= \sum_{j=1}^n\int_{\R^3\times\S^2}B_{ij}M_i^{1/2}M_j^*
	\left(\frac{(M_i'^*)^{1/2}f_i'}{(M_i'M_i'^*)^{1/2}}
	+ \frac{(M_j')^{1/2}f_j'^*}{(M_j'^*M_j')^{1/2}}\right)dv^*d\sigma \\
	&= \sum_{j=1}^n\int_{\R^3\times\S^2}B_{ij}M_i^{1/2}M_j'
	\left(\frac{(M_i'^*)^{1/2}f_i'}{(M_iM_i^*)^{1/2}}
	+ \frac{(M_j')^{1/2}f_j'^*}{(M_j^*M_j)^{1/2}}\right)dv^*d\sigma \\
  &= \sum_{j=1}^n\int_{\R^3\times\S^2}B_{ij}
	\big(\rho_{\infty,j}^{1/2}N'^*f_i' + \rho_{\infty,i}^{1/2}N'f_j'^*\big) 
	\rho_{\infty,j}^{1/2}N^* dv^*d\sigma.
\end{align*}
The transformation $\sigma\mapsto-\sigma$ leaves $v$ and $v^*$ unchanged
and exchanges $v'$ and $v'^*$. Assumption (A5) ($b_{ij}$ is an even function)
ensures that $B_{ij}$ is unchanged under this transformation. Therefore,
$$
  \int_{\R^3\times\S^2} B_{ij}f_i' N'^* N^* dv^*d\sigma
	= \int_{\R^3\times\S^2}B_{ij}f_i'^*N'N^* dv^*d\sigma,
$$
and we can write $K^{(1)}$ as
\begin{equation}\label{aux4}
  K_i^{(1)}(f) = \frac12\sum_{j=1}^n\rho_{\infty,j}^{1/2}
	\big(\rho_{\infty,j}^{1/2}K_{ij}^{(1)}(f_i)
	+ \rho_{\infty,i}^{1/2}K_{ij}^{(1)}(f_j)\big),
\end{equation}
where
$$
  K_{ij}^{(1)}(f_k) = \int_{\R^3\times\S^2}B_{ij}(N'^*f_k'+N'f_k'^*)N^* dv^*d\sigma,
	\quad 1\le i,j,k\le n.
$$
Note that $K_{ij}^{(1)}=K_{ji}^{(1)}$.
In a similar way, we can decompose the operator $K^{(2)}$:
$$
  K_i^{(2)}(f) = \sum_{j=1}^n(\rho_{\infty,i}\rho_{\infty,j})^{1/2}N
	\int_{\R^3\times\S^2}B_{ij}f_j^* N^* dv^*d\sigma
	= \sum_{j=1}^n(\rho_{\infty,i}\rho_{\infty,j})^{1/2}K_{ij}^{(2)}(f_j),
$$
where
$$
  K_{ij}^{(2)}(f_j) = N\int_{\R^3\times\S^2}B_{ij}N^*f_j^*dv^*d\sigma.
$$

Next, we estimate the derivatives of $K_{ij}^{(\ell)}$. It is shown in
\cite[Eqs.~(5.15)-(5.18)]{MoNe06} that for all $\eps>0$, there exists
$C(\eps)>0$ such that for any $f\in H_v^1$, $1\le i,j,k\le n$, and $\ell=1,2$,
\begin{equation}\label{aux5}
  \|\na_v K_{ij}^{(\ell)}(f_k)\|_{L_v^2}^2
	\le \eps\|\na_v f_k\|_{L_v^2}^2 + C(\eps)\|f_k\|_{L_v^2}^2.
\end{equation}
Then we infer from \eqref{aux4} that
\begin{align*}
  \|\na_v K^{(1)}(f)\|_{L_v^2}^2
	&= \sum_{i=1}^n\bigg\|\frac12\sum_{j=1}^n
	\rho_{\infty,j}^{1/2}\big(\rho_{\infty,j}^{1/2}\na_v K_{ij}^{(1)}(f_i)
	+ \rho_{\infty,i}^{1/2}\na_v K_{ij}^{(1)}(f_j)\big)\bigg\|_{L_v^2}^2 \\
	&\le \frac{n}{4}\sum_{i,j=1}^n\left\|\rho_{\infty,j}^{1/2}
	\big(\rho_{\infty,j}^{1/2}\na_v K_{ij}^{(1)}(f_i)
	+ \rho_{\infty,i}^{1/2}\na_v K_{ij}^{(1)}(f_j)\big)\right\|_{L_v^2}^2 \\
	&\le n(\max_{1\le i\le n}\rho_{\infty,i})^2\sum_{i,j=1}^n
	\|\na_v K_{ij}^{(1)}(f_i)\|_{L_v^2}^2.
\end{align*}
Thus, by \eqref{aux5}, it follows that for $\ell=1$,
$$
  \|\na_v K^{(\ell)}(f)\|_{L_v^2}^2 \le n^2(\max_{1\le i\le n}\rho_{\infty,i})^2
	\sum_{i=1}^n\big(\eps\|\na_v f_i\|_{L_v^2}^2 + C(\eps)\|f_i\|_{L_v^2}^2\big).
$$
A similar computation shows that this estimate also holds for $\ell=2$.
We infer that
$$
  \|\na K(f)\|_{L_v^2}^2 \le 4n^2(\max_{1\le i\le n}\rho_{\infty,i})^2\sum_{i=1}^n
	\big(\eps\|\na_v f_i\|_{L_v^2}^2 + C(\eps)\|f_i\|_{L_v^2}^2\big).
$$
This proves assumption (H2) since $\eps>0$ is arbitrary.

\begin{proof}[Proof of Theorem \ref{thm.conv}]
We have verified that assumptions (H1)-(H3) are satisfied.
Then, using exactly the same arguments as in the proof of Theorem 1.1 in
\cite{MoNe06}, but now for the multi-species case, we conclude the
exponential decay \eqref{decay1} of the semigroup $\e^{tB}$, which is
the first property of the theorem.

It remains to show that the decay estimate \eqref{decay2} follows from
\eqref{decay1}. For this, we write the initial value $f_I$ as
$f_I=\varPi^B(f_I) + (I-\varPi^B)(f_I)$, where $\varPi^B$ is the projection
onto $\ker(B)$ in $L_{x,v}^2$. Then the solution to \eqref{LBE} is given by
$$
  f(t) = \e^{tB}f_I = \e^{tB}\varPi^B(f_I) + \e^{tB}(I-\varPi^B)(f_I), \quad
	t\ge 0.
$$
We have already shown that
$$
  \|\e^{tB}(I-\varPi^B)g\|_{H_{x,v}^1} \le C\e^{-\tau t}\|g\|_{H_{x,v}^1}
	\quad\mbox{for all }g\in H_{x,v}^1,\ t>0.
$$
In particular, the choice $g=(I-\varPi^B)(f_I)$ and the property
$(I-\varPi^B)^2=I-\varPi^B$ lead to
$$
  \|\e^{tB}(I-\varPi^B)(f_I)\|_{H_{x,v}^1} 
	\le C\e^{-\tau t}\|(I-\varPi^B)(f_I)\|_{H_{x,v}^1}.
$$

It remains to prove that $f_\infty = \varPi^B(f_I) = \e^{tB}\varPi^B(f_I)$
is the global equilibrium. Since $B\varPi^B(f_I)=0$ and $\varPi^B(f_I)$
does not depend on time, the constant-in-time function $g=\varPi^B(f_I)$
is the unique solution to the Cauchy problem
$$
  \pa_t g = Bg, \quad t>0, \quad g(0) = \varPi^B(f_I).
$$
This shows that $\varPi^B(f_I) = \e^{tB}g(0) = \e^{tB}\varPi^B(f_I)$
and finishes the proof.
\end{proof}


\end{document}